 \newtheorem{thm}{Theorem}[section]
 \newtheorem{cor}[thm]{Corollary}
 \newtheorem{lem}[thm]{Lemma}
 \newtheorem{prop}[thm]{Proposition}
 \theoremstyle{definition}
 \theoremstyle{remark}
 \newtheorem{rem}[thm]{Remark}
 \numberwithin{equation}{section}
\begin{document}

%
%
%
%
%
%
%
%
%
\title[On the Number of Exceptional pairs over a Class of Nakayama Algebras]
{On the Number of Exceptional Pairs Over a Class of Nakayama Algebras}
\author[PFFE]{Pedro Fernando Fern\'andez Espinosa}
\address{
	Escuela de Matemáticas y Estadistica  \\
	Universidad Pedagógica y Tecnológica de Colombia \\
	Carrera 18 con Calle 22\\
	Duitama - Boyacá\\
    ORCID ID: 0000-0002-2650-4536}
\email{pedro.fernandez01@uptc.edu.co}
\thanks{This work was completed with the support of Universidad Pedagógica y Tecnológica de Colombia and Universidad de Antioquia.}
\author[DRM]{David Reynoso--Mercado}
\address{Instituto de Matemáticas\br
	Universidad de Antioquia \br
	Calle 67 No. 53-108\br
	Medell\'in-Colombia\\
    ORCID ID: 0009-0008-8299-3486}
\email{david.reynoso@udea.edu.co}
\subjclass{Primary 16G20; 16G70;  Secondary 05A17; 16E05}
\keywords{Exceptional module, exceptional pair, Auslander- Reiten quiver, Nakayama Algebras, integer sequences}
\date{November 03, 2025}
\begin{abstract}
In this paper some combinatorial and homological tools are used to describe and give an explicit formula for the number of exceptional pairs (exceptional sequences of length two) for some classes of Nakayama Algebras and for the Auslander algebra of a radical square zero algebra of type $\mathbb{A}_n$. In addition, we explore how the number of exceptional pairs can be connected with some integer sequences in the OEIS (The On-line Encyclopedia of Integer Sequences).
\end{abstract}
\maketitle
\section{Introduction}
Exceptional sequences were introduced by Gorodentsev and Rudakov in \cite{gorodentsev1987}  to study exceptional vector bundles on $\mathbb{P}^2$, in fact, exceptional sequences can be regarded as a certain variation of tilting objects and are useful for studying the structure of derived
categories of algebraic varieties (see \cite{bondal1989}, \cite{rudakov1990}). \par\smallskip

\noindent  In the context of finite-dimensional algebras  Crawley-Boevey in \cite{crawley1992} and C.M. Ringel in \cite{ringel1994} defined exceptional sequences for path algebras of acyclic quivers as sequences of representations of a given length satisfying specific homological properties.  Recently, exceptional sequences have been studied in various contexts, for instance, in 2010, K. Igusa and R. Schiffler in \cite{igusa2010} established interesting connections between exceptional sequences and cluster algebras. In 2017, H. Krause \cite{krause2017} presented a close relationship between exceptional sequences and highest weight categories, also H. Krause and A. Hubery \cite{krause2013} found out an interest connection between bilinear lattices and exceptional sequences. Furthermore, in 2018 A. Buan and B. Marsh \cite{buan2018} 
introduced the notions of $\tau$-exceptional and signed $\tau$-exceptional sequences for any finite dimensional algebra; in the hereditary case, these notions coincide with the classical exceptional and signed exceptional sequences. In addition, there is a significant amount of work on exceptional sequences and their generalizations from a combinatorial point of view  \cite{araya2009, Carrick, igusa1,igusa2010, igusa3, igusa4, Msapato,ringel2013, Seidel2001, Sen1, Sen2} among others. It is important to emphasize that, for Dynkin algebras, there are several approaches—using different tools and perspectives—to obtain the number of exceptional pairs and exceptional sequences (see \cite{araya2009, Carrick, ringel2013}), whereas for bound quiver algebras only a few cases have been studied.
\par\smallskip 
 
\noindent This work focuses on the combinatorial properties of exceptional pairs in the context of algebras $A = \mathbb{F}Q/I$, where $Q$ is a Dynkin quiver of type $\mathbb{A}_n$ with a linear orientation, and $I$ is an admissible ideal generated by a collection of disjoint relations.
As two particular cases, we provide explicit formulas for the number of exceptional pairs when $I$ is generated by a single relation $R$ of length $k$ starting at vertex $1$, and when no admissible ideal $I$ is considered (this case was already studied by O. Bernal in \cite{Bernal}). \par\smallskip 

\noindent Moreover, we examine the case in which $I$ is an admissible ideal generated by all possible relations of length two, and as a direct consequence, we obtain an explicit formula for the Auslander algebra of a radical square zero algebra of type $\mathbb{A}_n$ studied in \cite{Chen}.
In all the aforementioned cases, we establish an algorithm to compute the number of exceptional pairs. \par\smallskip

\noindent Finally, and of equal importance, we introduce several new integer sequences arising from exceptional pairs in the OEIS (The On-Line Encyclopedia of Integer Sequences, see \cite{OEIS}), as well as point out others that are already encoded there.
 
\par\smallskip 

\noindent This paper is organized as follows. In Section \ref{preliminaries}, we recall the main notation and definitions concerning exceptional pairs, Nakayama algebras, and the Auslander algebra of a radical square-zero algebra of type $\mathbb{A}_n$.
\par\smallskip
\noindent In Section \ref{General}, we describe and enumerate the indecomposable modules of any Nakayama algebra and establish conditions that determine whether a pair of $A$-modules $(M, N)$ is exceptional. In particular, we provide criteria to determine when $\mathrm{Hom}(M, N) \cong 0$ and $\mathrm{Ext}^i(M, N) \cong  0$ for all $i \geq 1$. As an application, in Section \ref{particularcases} we compute the number of exceptional pairs when $Q$ is a Dynkin quiver of type $\mathbb{A}_n$, the number of exceptional pairs for algebras of the form $A = \mathbb{F}Q/I$, where $I$ is an admissible ideal generated by a collection of disjoint relations and we provide an explicit formula for the number of exceptional pairs when $I$ is generated by a single relation $R$ of length $k$ starting at vertex $1$. In addition, we describe several interesting integer sequences from the On-Line Encyclopedia of Integer Sequences (OEIS) that arise from these computations.\par\smallskip 

\noindent Finally, in Section \ref{rad2}, we study the case where $I$ is the admissible ideal generated by all possible relations of length two. As a direct consequence, we obtain an explicit formula for the Auslander algebra of a radical square-zero algebra of type $\mathbb{A}_n$, as studied in \cite{Chen}.

\section{Preliminaries}\label{preliminaries}
In this section, we recall main definitions and notation to be used throughout the paper \cite{Happel,Sen2,Chen,crawley1992,ringel2013}. \par\smallskip

\noindent Let $\mathbb{F}$ be an algebraically closed field, throughout this paper we denote by $A$ as a path algebra $\mathbb{F}Q/I$, where $Q$ is a linear oriented Dynkin quiver of type $\mathbb{A}_n$ where $I$ is an admissible ideal generated by a collection of admissible  relations. In fact, we recall that if $Q$ be either a linearly oriented quiver with underlying graph $\mathbb{A}_n$ or a cycle $\widetilde{\mathbb{A}_n}$ with cyclic
orientation. That is, $Q$ is one of the following quivers

\begin{figure}[h!]
\begin{center}
\begin{tikzpicture}[scale=0.55]
\def \radius {2cm}
\def \margin {14} 
{
\node at ({360/8 * (1 - 1)}:\radius)(U2) {\tiny{$1$}};

\node at ({360/8 * (2 - 1)}:\radius)(U3) {\tiny{$2$}};

\node at ({360/8 * (3 - 1)}:\radius) (U4){\tiny{$3$}};

\node at ({360/8 * (4 - 1)}:\radius)(U5) {\tiny{$4$}};

\node at ({360/8 * (5 - 1)}:\radius)(U6) {\tiny{$5$}};

\node at ({360/8 * (6 - 1)}:\radius)(U7) {\tiny{$\ddots$}};

\node at ({360/8 * (7 - 1)}:\radius)(Uk-1) {\tiny{$n-1$}};

\node at ({360/8 * (8 - 1)}:\radius)(Uk) {\tiny{$n$}};


\draw[->, >=latex] ({360/8* (1 - 1)+\margin}:\radius)
arc ({360/8 * (1 - 1)+\margin}:{360/8 * (1)-\margin}:\radius)node[midway,sloped,above] {};

\draw[->, >=latex] ({360/8* (2 - 1)+\margin}:\radius)
arc ({360/8 * (2 - 1)+\margin}:{360/8 * (2)-\margin}:\radius)node[midway,sloped,above] {};

\draw[->, >=latex] ({360/8* (3 - 1)+\margin}:\radius)
arc ({360/8 * (3 - 1)+\margin}:{360/8 * (3)-\margin}:\radius)node[midway,sloped,above] {};

\draw[->, >=latex] ({360/8* (4 - 1)+\margin}:\radius)
arc ({360/8 * (4 - 1)+\margin}:{360/8 * (4)-\margin}:\radius)node[midway,sloped,above] {};

\draw[->, >=latex] ({360/8* (5 - 1)+\margin}:\radius)
arc ({360/8 * (5 - 1)+\margin}:{360/8 * (5)-\margin}:\radius)node[midway,sloped,below] {};

\draw[->, >=latex] ({360/8* (6 - 1)+\margin}:\radius)
arc ({360/8 * (6 - 1)+\margin}:{360/8 * (6)-\margin}:\radius)node[midway,sloped,below] {};

\draw[->, >=latex] ({360/8* (7 - 1)+\margin}:\radius)
arc ({360/8 * (7 - 1)+\margin}:{360/8 * (7)-\margin}:\radius)node[midway,sloped,below] {};

\draw[->, >=latex] ({360/8* (8 - 1)+\margin}:\radius)
arc ({360/8 * (8 - 1)+\margin}:{360/8 * (8)-\margin}:\radius)node[midway,sloped,below] {};
}

\node["\tiny{$1$}"] at (4,0)(1){$\bullet$};
\node["\tiny{$2$}"] at (5.5,0)(2){$\bullet$};

\node at (6.5,0)(dot){\tiny{$\cdots$}};

\node["\tiny{$n-1$}"] at (7.5,0)(n-1){$\bullet$};
\node["\tiny{$n$}"] at (9,0)(n){$\bullet$};

\node at (3,0)(or){\small{or}};

\path[->] (1) edge (2);
\path[->] (n-1) edge (n);

\end{tikzpicture}
\caption{Quiver $\widetilde{\mathbb{A}_n}$ with cyclic orientation and Dynkin diagram $\mathbb{A}_n$ linearly oriented.}
\end{center}
\end{figure}
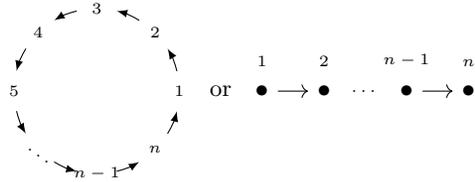\label{Nakayama}

\noindent Then $A$ is called a Nakayama algebra \cite{Happel}. In this paper, we fix $n \ge 3$ and restrict our attention to Nakayama algebras arising from linearly oriented quivers with underlying graph $\mathbb{A}_n$. Throughout this paper, the term Nakayama algebra of type I will refer exclusively to algebras of this form. In particular, we analyze the following forms of the admissible ideal $I$ generated by 

\begin{itemize}
\item Any ideal generated by a collection of overlap-free relations (see Subsection \ref{nointer}).
    \item A single relation $R=\alpha_1\cdots\alpha_{1+p}$ of length $m=1+p$ starting at vertex $1$ (see Corollary \ref{cor1}).
    \item All possible relations of length two, this case is also known as radical square zero Nakayama algebra (see 
    Subsection \ref{rad2}).
\end{itemize}

\noindent Also, we point out that according to \cite{Chen}  $A$ is the Auslander algebra of a radical square zero algebra of type $\mathbb{A}_n$.
Then $A$ is given by the following quiver $Q$ :
$$
1 \stackrel{a_1}{\rightarrow} 2 \stackrel{a_2}{\rightarrow} \cdots \stackrel{a_{2 m-3}}{\rightarrow} 2 m-2 \stackrel{a_{2 m-2}}{\rightarrow} 2 m-1
$$
\noindent with the relations:
$$
a_{2 k-1} a_{2 k}=0 \quad (1 \leq k \leq m-1)
$$

\noindent Another key concept for this paper is the concept of exceptional pair. A pair of modules $(M, N)$ over $A$ is called an exceptional pair if $\operatorname{Hom}(N, M)\cong 0$ and if $\operatorname{Ext}^i(N, M)\cong0,\, i \geq 1$. A sequence of  modules $\left(M_1, \ldots, M_t\right)$ is called an exceptional sequence if for all $1 \leq i<j \leq t$, $\left(M_i, M_j\right)$ is an exceptional pair. Moreover it is called complete if $t=n$. Here we want to emphasize that elements of exceptional sequences are isomorphism classes of $A$ modules. The study of exceptional sequences, both in the context of coherent sheaves and their derived categories, as well as in quiver representations and their combinatorics, plays a fundamental role. Enumerative aspects are also of particular interest. For instance, the number of complete exceptional sequences of hereditary algebras of types $\mathbb{A}_n$ and $\mathbb{D}_n$ are given by $(n+1)^{n-1}$ and $2(n-1)^n$ (See \cite{Sen2, Seidel2001}). This work contributes to the study of combinatorial and enumerative aspects of these algebraic objects in the context of Nakayama algebras.

\section{General Results: Any Nakayama Algebra of type I}\label{General}

In this section, we present some of the main results of this paper. In particular, we describe and enumerate all indecomposable modules of an arbitrary Nakayama algebra by using the combinatorial structure of intervals (see Theorem \ref{thmind}). Moreover, for a fixed indecomposable module \(N = [a,b]\), in Proposition \ref{prophomab} we compute the number of modules \(M\) such that \(\mathrm{Hom}(M,N) \cong\mathbb{F}\). Furthermore, in Proposition \ref{cor3.9}, by considering the structure of the projective resolution of \([a,b]\), we determine how many modules \(M\) satisfy \(\mathrm{Ext}^i(N,M) \cong \mathbb{F}\) for some \(i \geq 1\). Finally, we introduce an explicit algorithm to compute the number of exceptional pairs by combining Theorem \ref{thmind}, Proposition \ref{prophomab}, and Proposition \ref{cor3.9}.

\subsection{Number of indecomposable modules}

In this section, we describe and enumerate the indecomposable modules of any Nakayama algebra of type I. To do so, we first point out that any Nakayama algebra can be completely described by the following data:

\begin{itemize}
    \item [a.] The number of vertices of the quiver $n$.
    \item [b.] A list of $k$ relations in the quiver. Each relation is of the form $R_l= \alpha_{i_l} \alpha_{i_l+1} \cdots \alpha_{i_l+p_l}$ with $1\leq l\leq k$, $1<p_l \leq n-i_l$, where $i_l$ is the vertex where the relation begins and $p_l$ is its corresponding length.
\end{itemize}

\noindent From now on, we use intervals \([i,j]\) with \(1 \leq i \leq j \leq n\), to describe the indecomposable representations. We mean by \([i,j]\) with \(1 \leq i \leq j \leq n\)  the representation in which the vector space at vertex \(l\) is the field \(\mathbb{F}\) if \(i \leq l \leq j\) and \(0\) at every other vertex. That is,
$$0 \longrightarrow \cdots \longrightarrow \overset{i}{\mathbb{F}} \longrightarrow \mathbb{F} \longrightarrow \cdots \longrightarrow \overset{j}{\mathbb{F}} \longrightarrow 0 \longrightarrow \cdots \longrightarrow 0,$$
with the identity maps on the nonzero components (and the zero maps elsewhere).

\noindent It is straightforward to verify that for each vertex \(i \in {1,\dots,n}\):
\begin{itemize}
    \item If \(i_{j-1} < i \leq i_j\), the corresponding projective module is
\begin{eqnarray}
    P(i)&=&[i,i_j+p_j]
\end{eqnarray}
\item To determine the corresponding injective module, assume \(i_{j-1} + p_{j-1} < i \leq i_j + p_j\). Then
\begin{eqnarray}
        I(i)&=&[i_j+1,i]
\end{eqnarray}
 \item  Finally, the corresponding simple modules are
\begin{eqnarray}
    S(i)&=&[i,i]
\end{eqnarray}
\end{itemize}
\noindent Here, \([a,b]\) denotes the interval representation defined previously.

\noindent Note that the projective–injective modules correspond precisely to the intervals $I_1,\cdots,\,I_{k+1}$; in fact,
$$P(i_{j-1}+1)=I_j=I(i_j+p_j)\qquad\text{for every } j\in{1,\dots,k+1}.$$

\noindent In general, if \([a,b]\) and \([a',b']\) are two intervals representing some module, we say that they intersect if and only if $a\leq b' ;\text{ and }; a'\leq b$. When \([a,b]\) and \([a',b']\) intersect, we denote their intersection by
$$[a,b]\cap[a',b']=[c,d],$$
where \(c=\max\{a,a'\}\) and \(d=\min\{b,b'\}\).

\noindent  Following the knitting algorithm for the Auslander–Reiten quiver (see \cite{schiffler2014}) , we obtain that for each \(l \in {1, \dots, n}\), if \(i_{j-1} < l \leq i_j\), then an interval \([a,b]\) represents a module if and only if \(a = l\) and \(b \leq i_j + p_j\). In this way, all modules are determined by the projective modules. For each \(l \in {1, \dots, n-1}\), we have the sets
$$A_{P(l)}=\{[l,l],\,[l,l+1],\cdots,\,[l,i_j+p_j]\},$$
and \(A_{P(n)} = {[n,n]}\). Moreover, for each \(P(l)\), there are precisely \(|A_{P(l)}|\) modules, and it is not difficult to verify that
$$|A_{P(l)}|=i_j+p_j-l+1 \qquad \text{whenever } i_{j-1} < l \leq i_j.$$

\noindent Let $A = \mathbb{F}Q/I$ be a Nakayama algebra. We denoted by $\mathrm{Ind} (A)$ the set of indecomposable $A$-modules.

\begin{rem}
The family \(\{A_{P(i)} \mid 1 \le i \le n\}\) forms a partition of \(\mathrm{Ind}(A)\).
\end{rem}

\begin{thm}\label{thmind}
Let $Q$  be a linearly oriented quiver of type $\mathbb{A}_n$ and let  $A = \mathbb{F}Q/I$ be a Nakayama algebra, where $I$ is generated by minimal relations of the form
$\alpha_{i_l}\alpha_{i_l+1}\cdots\alpha_{i_l + p_l}$, $1\le l\le k.$
 Then the number of indecomposable $A$-modules, denoted $|\mathrm{Ind} (A)|$, is:
\[
|\mathrm{Ind} (A)| = \sum_{l=1}^{k+1} (i_l - i_{l-1})(i_l + p_l) - \frac{n(n-1)}{2},
\]
with the conventions  $i_0=p_{k+1} = 0$ y $i_{k+1} = n$.
\end{thm}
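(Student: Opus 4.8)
The plan is to count $|\mathrm{Ind}(A)|$ by summing the sizes of the blocks in the partition $\{A_{P(l)} \mid 1 \le l \le n\}$ recalled just before the statement, and then simplify the resulting double sum. First I would group the vertices $l \in \{1,\dots,n\}$ according to which "segment'' they fall in: for each $j \in \{1,\dots,k+1\}$, the vertices with $i_{j-1} < l \le i_j$ form a block of size $i_j - i_{j-1}$, and on this block the formula $|A_{P(l)}| = i_j + p_j - l + 1$ holds (using the conventions $i_0 = 0$, $i_{k+1} = n$, $p_{k+1} = 0$; note that for the last segment this correctly gives $|A_{P(l)}| = n - l + 1$, matching $A_{P(n)} = \{[n,n]\}$ when $l = n$). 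Hence
\[
|\mathrm{Ind}(A)| = \sum_{l=1}^{n} |A_{P(l)}| = \sum_{j=1}^{k+1} \; \sum_{l=i_{j-1}+1}^{i_j} \bigl( i_j + p_j - l + 1 \bigr).
\]

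Next I would evaluate the inner sum. Writing $m_j := i_j + p_j + 1$, the inner sum is $\sum_{l=i_{j-1}+1}^{i_j} (m_j - l) = (i_j - i_{j-1}) m_j - \sum_{l=i_{j-1}+1}^{i_j} l$, and the remaining sum of consecutive integers is $\binom{i_j+1}{2} - \binom{i_{j-1}+1}{2} = \tfrac{1}{2}\bigl(i_j(i_j+1) - i_{j-1}(i_{j-1}+1)\bigr)$. Substituting back and splitting off the $+1$ inside $m_j$, the contribution of segment $j$ becomes
\[
(i_j - i_{j-1})(i_j + p_j) + (i_j - i_{j-1}) - \tfrac{1}{2}\bigl(i_j(i_j+1) - i_{j-1}(i_{j-1}+1)\bigr).
\]
Now I sum over $j$ from $1$ to $k+1$. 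The term $\sum_j (i_j - i_{j-1})$ telescopes to $i_{k+1} - i_0 = n$. The term $\tfrac{1}{2}\sum_j \bigl(i_j(i_j+1) - i_{j-1}(i_{j-1}+1)\bigr)$ also telescopes, to $\tfrac{1}{2}\bigl(n(n+1) - 0\bigr) = \tfrac{n(n+1)}{2}$. Therefore
\[
|\mathrm{Ind}(A)| = \sum_{j=1}^{k+1} (i_j - i_{j-1})(i_j + p_j) + n - \frac{n(n+1)}{2} = \sum_{j=1}^{k+1} (i_j - i_{j-1})(i_j + p_j) - \frac{n(n-1)}{2},
\]
which is exactly the claimed formula (after relabelling the summation index $j \leftrightarrow l$).

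The only real content beyond bookkeeping is the justification that the block sizes and the formula $|A_{P(l)}|=i_j+p_j-l+1$ are correct, and that the $A_{P(l)}$ genuinely partition $\mathrm{Ind}(A)$ — but both of these are established in the discussion preceding the theorem via the knitting algorithm, so I would simply cite them. Thus the main obstacle is purely organizational: handling the boundary segment $j = k+1$ correctly under the stated conventions (checking that $p_{k+1}=0$ and $i_{k+1}=n$ make the generic segment formula specialize to the genuine count $n-l+1$ of intervals $[l,b]$ with $b \le n$), and making sure the two telescoping sums are set up with matching endpoints. No delicate estimates or case analysis are needed; it is a finite computation with a clean telescoping structure.
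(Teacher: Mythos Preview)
Your proof is correct and follows essentially the same approach as the paper: both use the partition $\{A_{P(l)}\}_{l=1}^n$, group the vertices into the $k+1$ segments determined by the relations, split off the constant part $(i_j-i_{j-1})(i_j+p_j+1)$, and then telescope the remaining sums. The only cosmetic difference is that you write the sum $\sum_{l=i_{j-1}+1}^{i_j} l$ as a telescoping difference of triangular numbers, whereas the paper simply collapses the double sum back into $\sum_{l=1}^{n} l$; the computations are otherwise line-for-line the same.
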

\begin{proof}
As we saw before the family $\{A_{P(i)} \mid 1 \le i \le n\}$ induce a partition of \(\mathrm{Ind}(A)\), then
\begin{eqnarray*}
\left|\textrm{Ind }(A)\right|&=&\sum_{i=1}^{n}|A_{P(i)}|\\
&=&\sum_{l=1}^{k+1}\left(\sum_{i=i_{l-1}+1}^{i_l}i_l+p_l-i+1\right)\\
&=&\sum_{l=1}^{k+1}\left((i_l+p_l+1)(i_l-i_{l-1})-\sum_{i=i_{l-1}+1}^{i_l}i\right)\\
&=&\sum_{l=1}^{k+1}(i_l-i_{l-1})(i_l+p_l)+\sum_{l=1}^{k+1}(i_l-i_{l-1})-\sum_{l=1}^{k+1}\sum_{i=i_{l-1}+1}^{i_l}i\\
&=&\sum_{l=1}^{k+1}(i_l-i_{l-1})(i_l+p_l)+(i_{k+1}-i_0)-\sum_{l=1}^ni\\
&=&\sum_{l=1}^{k+1}(i_l-i_{l-1})(i_l+p_l)+(n-\frac{n(n+1)}{2})\\
&=&\sum_{l=1}^{k+1}(i_l-i_{l-1})(i_l+p_l)-\frac{n(n-1)}{2} 
\end{eqnarray*}
\end{proof}


\subsection{On the number of pairs  \(\mathrm{Hom}(M,N) \cong\mathbb{F}\)}
\noindent  In this section, for a fixed indecomposable module \(N = [a,b]\) in any Nakayama algebra of type I, we compute the number of modules \(M\) such that \(\mathrm{Hom}(M,N) \cong\mathbb{F}\). Firstly, Proposition \ref{prophom} gives a criterion to establish when $\operatorname{Hom}(M,N) \cong 0$.

\begin{prop}\label{prophom}
    Let \(M\) and \(N\) be indecomposable modules, and let \([a,b]\) and \([a',b']\) be the interval modules associated with their representations. Then \(\operatorname{Hom}(M,N) \cong 0\) if and only if one of the following conditions holds:
\begin{enumerate}
  \item[(F1)] \([a,b]\) and \([a',b']\) do not intersect.
  \item[(F2)] \(b < b'\) or \(a < a'\).
\end{enumerate}
\end{prop}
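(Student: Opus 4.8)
The plan is to reduce the statement to a concrete calculation with the interval representations, since a homomorphism of representations of a linearly oriented $\mathbb{A}_n$ quiver is just a family of linear maps commuting with the structure maps. Because $M=[a,b]$ and $N=[a',b']$ each have all structure maps equal to the identity on their support and zero elsewhere, any morphism $f\colon M\to N$ is determined by a single scalar $\lambda$, placed as multiplication by $\lambda$ at every vertex $l$ lying in both supports, i.e.\ in $[a,b]\cap[a',b']$, and $0$ elsewhere. First I would make this precise: if the supports are disjoint (case (F1)) there is nothing at a common vertex to map, so $\operatorname{Hom}(M,N)=0$; this handles one implication for (F1) and, conversely, if they do intersect then on the overlap we have identity maps on both sides and the scalar $\lambda$ is unconstrained there.

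Next I would analyze the boundary commutativity conditions that force $\lambda=0$ when the supports do overlap. Write $[c,d]=[a,b]\cap[a',b']$ with $c=\max\{a,a'\}$, $d=\min\{b,b'\}$. The constraint comes from the arrow entering vertex $c$ and the arrow leaving vertex $d$. At the lower end: if $a<a'$ (so $c=a'$), then vertex $c-1=a'-1$ lies in $\operatorname{supp}M$ but not in $\operatorname{supp}N$; the square for the arrow $\alpha_{c-1}\colon (c-1)\to c$ reads $f_c\circ \mathrm{id}_M = (0\ \text{map in }N)\circ f_{c-1}=0$, forcing $\lambda=f_c=0$. Symmetrically, at the upper end: if $b<b'$ (so $d=b$), vertex $d+1=b+1$ lies in $\operatorname{supp}N\setminus\operatorname{supp}M$, and the square for $\alpha_d\colon d\to d+1$ gives $f_{d+1}\circ 0 = \mathrm{id}_N\circ f_d$, again forcing $\lambda=0$. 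Thus $a<a'$ or $b<b'$ each forces the morphism to vanish, which proves that (F2) implies $\operatorname{Hom}(M,N)\cong0$.

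For the converse direction I would show that if neither (F1) nor (F2) holds, then $\operatorname{Hom}(M,N)\cong\mathbb{F}\neq0$. Negating (F1) gives that the intervals intersect; negating (F2) gives $a'\le a$ and $b'\le b$, so $[a',b']\subseteq$ ... more precisely $a'\le a\le b'$ and $a\le b'\le b$, i.e.\ $[a',b']\cap[a,b]=[a,b']$ and the overlap is a nonempty ``prefix'' of $N$ and ``suffix'' of $M$. In that configuration, vertex $a$ is either the source of the whole quiver or has its incoming arrow coming from outside $\operatorname{supp}M$ (since $a-1<a'$ would... actually we need $a-1\notin\operatorname{supp}N$, which holds because $a-1<a\le b'$ but we need $a-1<a'$; if $a=a'$ this is vertex $a'-1\notin\operatorname{supp}N$, if $a>a'$ then $a-1\ge a'$ may lie in $\operatorname{supp}N$ — here one checks the commuting square is automatically satisfied because both relevant maps are identities). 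A careful case check on the two endpoints of $[a,b']$ shows every commutativity square is satisfied for arbitrary $\lambda$, so $\operatorname{Hom}(M,N)$ is one-dimensional, spanned by the morphism with scalar $\lambda$ on $[a,b']$. Hence $\operatorname{Hom}(M,N)\cong 0$ forces (F1) or (F2), completing the equivalence.

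The main obstacle is bookkeeping at the interval endpoints: one must verify that in the ``non-vanishing'' configuration every structure-map square genuinely commutes for all $\lambda$, paying attention to the three boundary cases at each end ($c=a=a'$ vs.\ $c=a>a'$ vs.\ the quiver boundary $a=1$, and dually at $d$), and checking that the admissible relations in $I$ never kill the common support — but since $M$ and $N$ are themselves well-defined $A$-modules and the overlap is an interval that occurs inside each, no new relation can be violated. I would organize this as a short lemma identifying $\operatorname{Hom}_A(M,N)$ with the space of scalars $\lambda$ such that the endpoint squares commute, then read off the two obstructions (F1) and (F2) directly.
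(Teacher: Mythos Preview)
Your approach is correct and essentially identical to the paper's: both argue that a morphism between interval modules is determined by a single scalar on the overlap, check that (F1) or (F2) forces that scalar to vanish, and in the remaining case $a'\le a\le b'\le b$ exhibit the explicit nonzero morphism given by the identity on $[a,b']$. The paper dismisses the forward implication as ``straightforward'' and only spells out the converse, whereas you unpack the boundary-square obstructions in more detail; but the underlying argument is the same.
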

\begin{proof}
It is straightforward to verify that if the intervals satisfy \textnormal{(F1)} or \textnormal{(F2)}, then \(\operatorname{Hom}(M,N) = 0\).
Now assume that the intervals satisfy \(a' \leq a \leq b' \leq b\), that is, neither \textnormal{(F1)} nor \textnormal{(F2)} holds. In this situation, it is not difficult to show that the map \(f = (f_i)_{1 \leq i \leq n}\), defined by \(f_i = 0\) for \(i < a\) or \(i > b'\) and \(f_i = 1_{\mathbb{F}}\) for \(a \leq i \leq b'\), is a nonzero morphism from \(M\) to \(N\). Hence \(\operatorname{Hom}(M,N) \cong\mathbb{F}\).
\end{proof}

\noindent From Proposition \ref{prophom} we obtain the following observation.

\begin{rem}\label{remhom1}
\(\operatorname{Hom}(M,N) \cong \mathbb{F}\) if and only if \(a' \le a \le b' \le b\).
\end{rem}

\begin{prop}\label{prophomab}
Let \([a,b]\) be the interval associated with the representation of an indecomposable \(A\)-module. 
Denote by \(H_{[a,b]}\) the number of modules \(M\) such that \(\operatorname{Hom}(M,[a,b]) \cong \mathbb{F}\).
Then
\begin{eqnarray}\label{car_hom}
H_{[a,b]} =\displaystyle \sum_{m = a}^{b} (i_{l_m} + p_{l_m}) + (b - a + 1)(1 - b),
\end{eqnarray}
where for each \(m \in \{1, \dots, n\}\) the index \(l_m \in \{1, \dots, k\}\) is defined by \(i_{l_m - 1} < m \le i_{l_m}\). In particular if $l_a=l_b$, then $H_{[a,b]}=(b - a + 1)\bigl(i_{l_a} + p_{l_a} + 1 - b\bigr)$.
\end{prop}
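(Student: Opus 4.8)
The plan is to count, for the fixed interval $N=[a,b]$, all indecomposable modules $M=[a',b']$ satisfying the criterion from Remark \ref{remhom1}, namely $a' \le a \le b' \le b$. First I would organize the count by the value of $b' = m$, which by the constraint ranges over $a \le m \le b$. For each such $m$, I need to count the modules $M$ of the form $[a',m]$ with $a' \le a$. By the classification in Theorem \ref{thmind} and the surrounding discussion, a pair $[a',m]$ represents a module precisely when, writing $l_m$ for the index with $i_{l_m-1} < m \le i_{l_m}$... wait, more carefully: $[a',b']$ is a module iff $a'$ lies in some block $(i_{j-1}, i_j]$ and $b' \le i_j + p_j$. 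So for fixed endpoint $m=b'$, the admissible left endpoints $a'$ are exactly those with $i_{l_m-1} < a' \le i_{l_m}$ such that additionally $m \le i_{l_m}+p_{l_m}$ — but this last inequality is automatically part of saying $[a',m]$ is a module, so it always holds in our range since $m \le b$ and... actually I should be careful and let the index $l_m$ absorb exactly this. The key combinatorial input is that, given $b'=m$, the number of valid $a'$ with $a' \le a$ equals $\min\{i_{l_m}+1,\,a+1\} - i_{l_m-1} - 1$ type expression; but since $m \le b$ and we want $a' \le a \le m$, and $a$ itself sits in block $l_a$, I expect the count of valid $a'$ to be $(i_{l_m}+p_{l_m}) - b$ plus a correction — let me instead follow the cleaner route below.

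The cleaner approach: fix $m$ with $a \le m \le b$ and count modules $[a',m]$. Since $[a',m]$ must be a module with right endpoint $m$, the left endpoint $a'$ ranges over $A$-permissible values, and combined with $a' \le a$, I claim the number of such $a'$ is $(i_{l_m}+p_{l_m}+1) - b$. Wait — that cannot be right dimension-wise. Let me reconsider: actually I want to sum over $a'$, not produce a per-$m$ count that already involves $b$. The honest structure of the identity
\[
H_{[a,b]} = \sum_{m=a}^{b}(i_{l_m}+p_{l_m}) + (b-a+1)(1-b)
\]
suggests that for each $m$ in $[a,b]$ the contribution is $(i_{l_m}+p_{l_m}) - b + 1$, i.e. exactly $(i_{l_m}+p_{l_m}+1-b)$ modules $M$ with $b'=m$. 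So the real content is: \emph{the number of indecomposables $[a',m]$ with $a' \le a \le m \le b$ equals $i_{l_m}+p_{l_m}+1-b$}. I would prove this by noting that once $b'=m$ is fixed with $a \le m \le b$, the constraint $a' \le a$ together with $[a',m]$ being a module forces $a'$ to run through a contiguous set; and by the module classification, the right endpoint $m$ being reachable from left endpoint $a'$ in block $l$ requires $m \le i_l + p_l$. The subtle point — and the main obstacle — is handling how the blocks interact: $a'$ may lie in an earlier block than $m$, yet $[a',m]$ need not be a module unless $a'$'s block extends far enough. I would argue that since $[a,b]$ is itself a module, $b \le i_{l_a}+p_{l_a}$, and for any $m$ with $a \le m \le b$ one has $m \le b \le i_{l_a}+p_{l_a}$, so the left endpoint $a$ reaches $m$; then the permissible $a'\le a$ are exactly the left endpoints in the same "reachability component" ending at or before $a$, whose count works out to the stated value. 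This block-bookkeeping — carefully verifying that the count of valid $a'$ is exactly $i_{l_m}+p_{l_m}+1-b$ and not off by the position of block boundaries between $a$ and $m$ — is where I expect to spend the most care.

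Finally, summing over $m$ from $a$ to $b$ gives
\[
H_{[a,b]} = \sum_{m=a}^{b}\bigl(i_{l_m}+p_{l_m}+1-b\bigr) = \sum_{m=a}^{b}(i_{l_m}+p_{l_m}) + (b-a+1)(1-b),
\]
since there are $b-a+1$ terms and $\sum_{m=a}^b(1-b) = (b-a+1)(1-b)$. For the special case $l_a = l_b$: then every $m \in [a,b]$ lies in the same block, so $l_m = l_a$ for all such $m$, whence $\sum_{m=a}^{b}(i_{l_m}+p_{l_m}) = (b-a+1)(i_{l_a}+p_{l_a})$ and the formula collapses to $H_{[a,b]} = (b-a+1)(i_{l_a}+p_{l_a}+1-b)$, as claimed. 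I would close by remarking that this specialization makes transparent that $H_{[a,b]}$ counts, for each of the $b-a+1$ choices of right endpoint $m$, exactly $i_{l_a}+p_{l_a}+1-b \ge 1$ choices of left endpoint (the lower bound holding because $b \le i_{l_a}+p_{l_a}$), which is a useful sanity check on the general identity.
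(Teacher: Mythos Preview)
You have applied Remark~\ref{remhom1} with the roles of the two intervals swapped. In Proposition~\ref{prophom} and Remark~\ref{remhom1}, the first argument $M$ carries the interval labeled $[a,b]$ and the second argument $N$ carries $[a',b']$; so with your labeling $N=[a,b]$ (fixed) and $M=[a',b']$ (variable), the condition for $\operatorname{Hom}(M,N)\cong\mathbb{F}$ reads $a \le a' \le b \le b'$, \emph{not} $a' \le a \le b' \le b$. In particular it is the \emph{left} endpoint $a'$ of $M$ that is constrained to lie in $[a,b]$, while the right endpoint satisfies $b'\ge b$. This is exactly why your attempt to organize the sum by $b'=m\in[a,b]$ never yields a clean count: under the wrong condition the contribution you seek would not involve $b$ at all, and you end up reverse-engineering the value $i_{l_m}+p_{l_m}+1-b$ from the target identity rather than deriving it.

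With the correct condition the argument (which is what the paper does) becomes a one-liner: sum over the left endpoint $m=a'\in\{a,\dots,b\}$ and use the partition $\{A_{P(i)}\}$. For each such $m$ the indecomposables with left endpoint $m$ are exactly $A_{P(m)}=\{[m,m],\dots,[m,i_{l_m}+p_{l_m}]\}$, and among these those with right endpoint $\ge b$ are $[m,b],\dots,[m,i_{l_m}+p_{l_m}]$, giving $i_{l_m}+p_{l_m}-b+1$ modules. No block-bookkeeping beyond the definition of $l_m$ is needed. Your final summation step and the specialization to $l_a=l_b$ are correct once this per-$m$ count is in hand, so the only repair needed is in the setup.
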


\begin{proof}
Let \([a,b]\) be the interval associated with the representation of an indecomposable \(A\)-module. 
Define the indicator function
\[
f_{[a,b]} : \mathrm{Ind}(A) \longrightarrow \{0,1\}, \qquad
f_{[a,b]}([i,j]) \;=\;
\begin{cases}
1, & \text{if } \operatorname{Hom}([i,j],[a,b]) \cong \mathbb{F},\\[2pt]
0, & \text{if } \operatorname{Hom}([i,j],[a,b]) = 0.
\end{cases}
\]
Since \(\{A_{P(i)}\}_{i=1}^n\) is a partition of \(\mathrm{Ind}(A)\), we have
\[
H_{[a,b]}
\;:=\;
\sum_{i=1}^n \;\sum_{[i,j]\in A_{P(i)}} f_{[a,b]}([i,j]),
\]
which ranges over all intervals corresponding to the indecomposable modules in \(\mathrm{Ind}(A)\). 
Thus \(H_{[a,b]}\) equals the number of modules \(M\) such that \(\operatorname{Hom}(M,[a,b]) \cong \mathbb{F}\).

\medskip
\noindent By Proposition~\ref{prophom}, \(f_{[a,b]}([i,j])=0\) whenever \(i<a\) or \(i>b\); hence
\[
H_{[a,b]}
\;=\;
\sum_{i=a}^{n} \;\sum_{[i,j]\in A_{P(i)}} f_{[a,b]}([i,j]).
\]
Moreover, for \(i\ge b+1\) the intervals \([i,j]\) and \([a,b]\) do not intersect, so 
\(f_{[a,b]}([i,j])=0\) in this range and therefore
\[
H_{[a,b]}
\;=\;
\sum_{i=a}^{b} \;\sum_{[i,j]\in A_{P(i)}} f_{[a,b]}([i,j]).
\]
Now, let \(l_m \in \{1,\dots,k\}\) be determined by \(i_{l_m-1} < m \le i_{l_m}\).
Since \(f_{[a,b]}([i,j])=0\) for \(j<b\), the only relevant elements of each \(A_{P(i)}\) are
\(\{[i,b], [i,b+1], \dots, [i,\, i_{l_i}+p_{l_i}]\}\). Hence
\[
\begin{aligned}
H_{[a,b]}
&= \sum_{i=a}^{b} \;\sum_{j=b}^{\,i_{l_i}+p_{l_i}} f_{[a,b]}([i,j])
 = \sum_{m=a}^{b} \bigl(i_{l_m}+p_{l_m}-b+1\bigr)
= \sum_{m=a}^{b} (i_{l_m}+p_{l_m}) \;+\; (b-a+1)(1-b).
\end{aligned}
\]
\end{proof}

\subsection{On the number of pairs  \(\mathrm{Ext}(M,N) \cong\mathbb{F}\)}

\noindent  In this section, for two indecomposable modules $M$ and $N$, we are interested in determining when
\[
\operatorname{Ext}^i(M,N) \cong \mathbb{F},
\]
and in enumerating such cases. \par\smallskip

\noindent Since \(A\) arises from a Nakayama algebra of type I, every module admits a finite projective resolution. If \(M\) has a projective resolution
\[
0 \longrightarrow P_m \xrightarrow{f_m} P_{m-1} \longrightarrow \cdots \longrightarrow
P_1 \xrightarrow{f_1} P_0 \xrightarrow{f_0} M \longrightarrow 0,
\]
\(\operatorname{Ext}^i(M,N) \cong \mathbb{F}\) if and only if the following conditions hold for some \(i \in \{1,\dots,m\}\):
\[
\begin{cases}
\operatorname{Hom}(P_{i+1},N) \cong 0 \;\cong\; \operatorname{Hom}(P_{i-1},N)
\ \text{and}\ \operatorname{Hom}(P_i,N) \cong \mathbb{F}, & \text{if } 1 \le i \le m-1,\\[4pt]
\operatorname{Hom}(P_{i-1},N) \cong 0
\ \text{and}\ \operatorname{Hom}(P_i,N) \cong \mathbb{F}, & \text{if } i = m.
\end{cases}
\]

\noindent The following result uses the combinatorial description of intervals to enumerate the pairs for which $\operatorname{Ext}^i(M,N) \cong \mathbb{F}$.

\begin{prop}\label{prop3.8}
Let $a_1 \le a_2 \le b_1 \le a_3 \le b_2 \le b_3$ be natural numbers. Then the following statements hold:
\begin{enumerate}
    \item The number of intervals $[c,d]$ that simultaneously satisfy
    \[
    \operatorname{Hom}([a_1,b_1],[c,d]) \cong 0,\qquad
    \operatorname{Hom}([a_2,b_2],[c,d]) \cong \mathbb{F},\qquad
    \operatorname{Hom}([a_3,b_3],[c,d]) \cong 0
    \]
    is $\,a_1\bigl(a_2-(b_1+1)\bigr) + a_2(a_3-a_2)\,$.
    \item The number of intervals $[c,d]$ that simultaneously satisfy
    \[
    \operatorname{Hom}([a_1,b_1],[c,d]) \cong 0,\qquad
    \operatorname{Hom}([a_2,b_1],[c,d]) \cong \mathbb{F}
    \]
    is $\, (a_2-a_1)\bigl(b_1+1-a_2\bigr)\,$.
\end{enumerate}
\end{prop}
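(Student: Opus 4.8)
The plan is to translate each counting problem into a description of the set of valid intervals \([c,d]\) using Remark~\ref{remhom1}, which characterizes when \(\operatorname{Hom}(M,N)\cong\mathbb{F}\): for an interval \([x,y]\), one has \(\operatorname{Hom}([x,y],[c,d])\cong\mathbb{F}\) exactly when \(c\le x\le d\le y\), and \(\operatorname{Hom}([x,y],[c,d])\cong 0\) otherwise. So in both parts I will rephrase the Hom-conditions as a conjunction of inequalities relating \(c,d\) to the given \(a_i,b_i\), and then count the lattice points \((c,d)\) with \(1\le c\le d\le n\) satisfying them. A subtlety to keep in mind is that \([c,d]\) must actually represent an indecomposable \(A\)-module, but since \(I\) is not specified here and only relations can \emph{kill} intervals, the relevant count is of intervals satisfying the stated inequalities; I will argue that any such \([c,d]\) automatically lies in \(\mathrm{Ind}(A)\) because it is a sub-quotient of one of the three given indecomposables (which forces it past any relation), or simply note that the hypotheses \(a_1\le a_2\le b_1\le a_3\le b_2\le b_3\) come precisely from a genuine projective resolution so all intermediate intervals are automatically modules.

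For part (2), the condition \(\operatorname{Hom}([a_2,b_1],[c,d])\cong\mathbb{F}\) gives \(c\le a_2\le d\le b_1\), while \(\operatorname{Hom}([a_1,b_1],[c,d])\cong 0\) must be compatible — and since \(d\le b_1\) already, the only way the second condition fails (i.e.\ the only way \(\operatorname{Hom}([a_1,b_1],[c,d])\cong\mathbb{F}\)) is \(c\le a_1\le d\). So the \(0\)-condition becomes exactly \(c>a_1\), i.e.\ \(a_1< c\). Combining: \(a_1 < c \le a_2\) and \(a_2\le d\le b_1\). These are independent ranges, so the count is \((a_2-a_1)\) choices for \(c\) times \((b_1-a_2+1)\) choices for \(d\), which is \((a_2-a_1)(b_1+1-a_2)\), as claimed. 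I would present this as a short two-line computation after setting up the inequalities.

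For part (1), I would handle the three conditions in turn. The middle condition \(\operatorname{Hom}([a_2,b_2],[c,d])\cong\mathbb{F}\) forces \(c\le a_2\le d\le b_2\). Given \(d\le b_2<b_3\) and \(d\ge a_2> ?\), I analyze \(\operatorname{Hom}([a_3,b_3],[c,d])\cong 0\): since \(d\le b_2\le b_3\) automatically, \(\operatorname{Hom}([a_3,b_3],[c,d])\cong\mathbb{F}\) would require \(c\le a_3\le d\), so the \(0\)-condition becomes \(d<a_3\) OR \(c>a_3\); but \(c\le a_2\le a_3\) rules out \(c>a_3\), leaving \(d< a_3\), i.e.\ \(d\le a_3-1\). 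Similarly for \(\operatorname{Hom}([a_1,b_1],[c,d])\cong 0\): here \(\operatorname{Hom}([a_1,b_1],[c,d])\cong\mathbb{F}\) needs \(c\le a_1\le d\le b_1\); since \(d\ge a_2\ge a_1\), the failure of \(d\le b_1\) (i.e.\ \(d> b_1\)) or the failure \(c> a_1\) is what we need. This splits the count into two disjoint regions according to whether \(d> b_1\) or \(d\le b_1\): if \(d> b_1\) then (with \(b_1< d\le a_3-1\)) the constraint \(c\le a_2\) with \(1\le c\le d\) and, since \(d>b_1\ge a_1\) we automatically have freedom \(c\in\{1,\dots,a_2\}\) (we still need \(\operatorname{Hom}([a_1,b_1],\cdot)\cong0\), which now holds for any such \(c\) because \(d>b_1\)), giving \(a_2\) choices of \(c\) and \((a_3-1-b_1)\) choices of \(d\), i.e.\ \(a_2(a_3-b_1-1)\); if instead \(d\le b_1\), then to get the \(0\)-condition we need \(c> a_1\), so \(a_1< c\le a_2\), that is \((a_2-a_1)\) choices of \(c\), and \(d\) ranges over \(a_2\le d\le b_1\) giving \((b_1-a_2+1)\) choices, i.e.\ \((a_2-a_1)(b_1-a_2+1)\). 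Adding, \(a_2(a_3-b_1-1)+(a_2-a_1)(b_1-a_2+1) = a_2 a_3 - a_2 b_1 - a_2 + a_2 b_1 - a_2^2 + a_2 - a_1 b_1 + a_1 a_2 - a_1 = a_2 a_3 - a_2^2 - a_1 b_1 + a_1 a_2 - a_1 = a_2(a_3-a_2) + a_1(a_2 - b_1 - 1)\), exactly the stated formula. I would present this as: fix the inequalities, split on the position of \(d\) relative to \(b_1\), count each rectangle, and add.

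The main obstacle is bookkeeping the case split in part (1) cleanly — making sure the two regions (\(d\le b_1\) versus \(d> b_1\)) are genuinely disjoint and jointly exhaustive among the valid \([c,d]\), and that the boundary inequalities are handled without off-by-one errors. The safest exposition is to first record, as a lemma-free preliminary, the three equivalences \(\operatorname{Hom}([a_i,b_i],[c,d])\cong\mathbb{F}\iff c\le a_i\le d\le b_i\) (and its negation), then reduce the problem to counting pairs \((c,d)\in\Z^2\) in an explicitly described polygon, after which everything is a finite sum of arithmetic progressions. I expect no conceptual difficulty beyond that; the hypotheses \(a_1\le a_2\le b_1\le a_3\le b_2\le b_3\) are exactly what is needed to keep the polygon a disjoint union of two axis-parallel rectangles.
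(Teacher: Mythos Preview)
Your proposal is correct and follows essentially the same approach as the paper: translate the Hom-conditions via Remark~\ref{remhom1} into inequalities on $(c,d)$, reduce to a region in the integer lattice, and count by splitting into two rectangles. The only cosmetic difference is the orientation of the case split in part~(1): the paper partitions according to whether $c>a_1$ or $c\le a_1$ (obtaining $(a_2-a_1)(a_3-a_2)+a_1\bigl(a_3-(b_1+1)\bigr)$), whereas you partition according to whether $d>b_1$ or $d\le b_1$; both decompositions are valid and simplify to the same expression.
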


\begin{proof}
On the one hand, by Proposition \ref{prophom}, we have 
$\operatorname{Hom}([a_i,b_i],[c,d]) \cong 0$ whenever at least one of the following holds:
\[
a_i < c, \qquad d < a_i, \qquad b_i < d, \qquad \text{or }\, b_i < c,
\]
with $i = 1$ or $i = 3$.

\noindent On the other hand, by Remark \ref{remhom1}, 
$\operatorname{Hom}([a_2,b_2],[c,d]) \cong \mathbb{F}$ if and only if
\[
c \le a_2 \le d \le b_2.
\]

\noindent By hypothesis, \(a_1 \le a_2 \le b_1 \le a_3 \le b_2 \le b_3\),
and under the constraint \(c \le a_2 \le d \le b_2\),
several of the above inequalities are impossible, namely:
\(d < a_1\), \(b_1 < c\), \(b_3 < d\), \(b_3 < c\), and \(a_3 < c\).

\noindent Therefore, for \(\operatorname{Hom}([a_1,b_1],[c,d])\cong 0\) one must have
\(a_1<c\) or \(b_1<d\); and for \(\operatorname{Hom}([a_3,b_3],[c,d])\cong 0\) the only
remaining possibility is \(d<a_3\).

\smallskip
\noindent We consider two cases.

\paragraph{Case I} \(a_1<c\le a_2\le d<a_3\).
Here \(c\) can be chosen in \(\{a_1+1,\dots,a_2\}\), giving \(a_2-a_1\) choices,
and \(d\) can be chosen in \(\{a_2,\dots,a_3-1\}\), giving \(a_3-a_2\) choices.
Thus this case contributes \((a_2-a_1)(a_3-a_2)\) intervals.

\paragraph{Case II} \(1\le c\le a_2\) and \(b_1+1\le d<a_3\).
Intervals with \(c\in\{a_1+1,\dots,a_2\}\) were already counted in Case~I, so we only need to
consider \(c\in\{1,\dots,a_1\}\). There are \(a_1\) choices for \(c\) and
\(a_3-(b_1+1)\) choices for \(d\), contributing \(a_1\bigl(a_3-(b_1+1)\bigr)\) intervals.

\smallskip
\noindent Summing the contributions of the two cases, the total number of intervals \([c,d]\) satisfying
\[
\operatorname{Hom}([a_1,b_1],[c,d])\cong 0,\qquad
\operatorname{Hom}([a_2,b_2],[c,d])\cong \mathbb{F},\qquad
\operatorname{Hom}([a_3,b_3],[c,d])\cong 0
\]
are 
\[
(a_2-a_1)(a_3-a_2)+a_1\bigl(a_3-(b_1+1)\bigr)
= a_1\bigl(a_2-(b_1+1)\bigr)+a_2(a_3-a_2).
\]

\noindent For item~(2), we require
\(\operatorname{Hom}([a_2,b_1],[c,d])\cong \mathbb{F}\), i.e., \(c\le a_2\le d\le b_1\).
As before, \(b_1<c\), \(d<a_1\), and \(b_1<d\) are impossible under these bounds, hence
the only way to ensure \(\operatorname{Hom}([a_1,b_1],[c,d])\cong 0\) is \(a_1<c\).
Thus \(c\in\{a_1+1,\dots,a_2\}\) (giving \(a_2-a_1\) choices) and
\(d\in\{a_2,\dots,b_1\}\) (giving \(b_1+1-a_2\) choices), for a total of
\((a_2-a_1)(b_1+1-a_2)\) intervals.

\end{proof}

\noindent From the previous results we derive the following two statements, which are crucial to enumerate the interval which satisfies
\(\operatorname{Ext}^i([a,b],[c,d]) \cong \mathbb{F}\). The first shows that there is no double counting, while the second provides the exact number of intervals \([c,d]\) satisfying the above condition for a fixed interval \([a,b]\).

\begin{prop}\label{prop_extji}
Let \(M\) be an indecomposable module with associated interval \([a,b]\). If for some module represented by \([c,d]\) and some \(i \ge 1\) we have
\(\operatorname{Ext}^i([a,b],[c,d]) \cong \mathbb{F}\),
then \(\operatorname{Ext}^j([a,b],[c,d]) \cong 0\) for every \(j \ne i\).
\end{prop}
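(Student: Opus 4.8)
The plan is to exploit the rigid, staircase shape of the minimal projective resolution of the interval module $M=[a,b]$ over a Nakayama algebra of type I. As recalled before Proposition~\ref{prop3.8}, $M$ has a minimal projective resolution
\[
0 \longrightarrow P_m \longrightarrow P_{m-1} \longrightarrow \cdots \longrightarrow P_1 \longrightarrow P_0 \longrightarrow M \longrightarrow 0,
\]
and over a Nakayama algebra each syzygy $\Omega^sM$ is again indecomposable, so each $P_s$ is an indecomposable projective $P_s=[\alpha_s,\beta_s]$. Using the explicit description of the projectives $P(i)$ and of which intervals occur as modules (given just before Theorem~\ref{thmind}), a short induction shows that $\Omega^sM=[\alpha_s,\beta_{s-1}]$ (with the convention $\beta_{-1}:=b$), $P_s=P(\alpha_s)$, and $\alpha_{s+1}=\beta_{s-1}+1$; the resolution continues as long as $\beta_{s-1}<\beta_s$ and terminates at the first index $m$ with $\beta_{m-1}=\beta_m$. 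The two structural facts I would extract are: the left endpoints satisfy $\alpha_0<\alpha_1<\cdots<\alpha_m$ (\emph{strictly} increasing), while the right endpoints satisfy $\beta_0\le\beta_1\le\cdots\le\beta_m$ (weakly increasing).

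Next, writing $N=[c,d]$, I would translate the needed $\operatorname{Hom}$-vanishing into combinatorics. By Proposition~\ref{prophom} and Remark~\ref{remhom1}, $\operatorname{Hom}(P_s,N)$ is $0$ or $\mathbb{F}$, and it equals $\mathbb{F}$ precisely when $c\le\alpha_s\le d\le\beta_s$. Hence the set $S:=\{\,s\in\{0,\dots,m\}\mid\operatorname{Hom}(P_s,N)\cong\mathbb{F}\,\}$ can be written as $\{s:\alpha_s\ge c\}\cap\{s:\alpha_s\le d\}\cap\{s:\beta_s\ge d\}$, which, by the monotonicity of $(\alpha_s)$ and $(\beta_s)$, is the intersection of two up-sets and one down-set of $\{0,\dots,m\}$, hence a (possibly empty) block of \emph{consecutive} integers.

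To finish, I would invoke the characterization of $\operatorname{Ext}^i$ recalled before Proposition~\ref{prop3.8}: if $\operatorname{Ext}^i(M,N)\cong\mathbb{F}$ for some $i\ge 1$, then $\operatorname{Hom}(P_i,N)\cong\mathbb{F}$ while $\operatorname{Hom}(P_{i-1},N)\cong 0$ and $\operatorname{Hom}(P_{i+1},N)\cong 0$ (with the convention $P_{m+1}=0$, so the last condition is automatic when $i=m$). Thus $i\in S$ but $i-1\notin S$ and $i+1\notin S$; since $S$ is a block of consecutive integers, this forces $S=\{i\}$. Consequently, for every $j\ge 1$ with $j\ne i$ we have $\operatorname{Hom}(P_j,N)\cong 0$, so the complex $\operatorname{Hom}(P_\bullet,N)$ vanishes in degree $j$ and therefore $\operatorname{Ext}^j(M,N)=0$; a similar observation on $\operatorname{Hom}(M,N)$ (which embeds in $\operatorname{Hom}(P_0,N)$) gives $\operatorname{Hom}(M,N)=\operatorname{Ext}^0(M,N)=0$ as well, so the conclusion holds for every $j\ne i$.

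The main obstacle is the first step: establishing that the left endpoints $\alpha_s$ of the projectives in the resolution are \emph{strictly} increasing. One must be careful not to over-claim here, since the right endpoints $\beta_s$ are only weakly increasing — indeed $\beta_{m-1}=\beta_m$ exactly when the final term $P_m$ is a syzygy that happens to be projective — so the argument has to tolerate that equality. The strictness of $(\alpha_s)$ itself comes out cleanly once the shape $\Omega^sM=[\alpha_s,\beta_{s-1}]$, $P_s=[\alpha_s,\beta_s]$ has been pinned down, since then $\alpha_{s+1}=\beta_{s-1}+1>\beta_{s-2}+1=\alpha_s$ whenever $\beta_{s-2}<\beta_{s-1}$, and this holds throughout the range in which the resolution is still running; the remaining verifications are routine.
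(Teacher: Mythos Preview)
Your argument is correct and, in fact, a bit cleaner than the paper's. Both proofs rest on the same two ingredients: the monotonicity of the left and right endpoints of the terms $P_s=[\alpha_s,\beta_s]$ in the minimal projective resolution, and the characterization of $\operatorname{Ext}^i(M,N)\cong\mathbb{F}$ in terms of which $\operatorname{Hom}(P_s,N)$ vanish (recorded just before Proposition~\ref{prop3.8}). The paper proceeds by invoking the two explicit configurations (I) and (II) supplied by Proposition~\ref{prop3.8} and then separately checks, via the chain of inequalities $a_0<a_1<b_0<a_2<\cdots$, that for $j<i$ and for $j>i$ (and again for $i=m$) the required inequalities fail. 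Your reformulation sidesteps this case analysis entirely: observing that $S=\{s:\operatorname{Hom}(P_s,N)\cong\mathbb{F}\}$ is the intersection of two up-sets and one down-set of $\{0,\dots,m\}$, hence a block of consecutive integers, immediately forces $S=\{i\}$ once $i\in S$ and $i\pm 1\notin S$. This is shorter and avoids any appeal to Proposition~\ref{prop3.8}.

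One small remark on your closing paragraph: the ``main obstacle'' you flag --- establishing that the $\alpha_s$ are \emph{strictly} increasing --- is in fact not needed for your argument. The interval property of $S$ only requires that $(\alpha_s)$ and $(\beta_s)$ be \emph{weakly} monotone, which follows directly from $\alpha_{s+1}=\beta_{s-1}+1\ge\alpha_s$ and from the description of the projective covers. So you can drop the worry about the terminal equality $\beta_{m-1}=\beta_m$ altogether; nothing in your proof uses strictness.
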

\begin{proof}
Let \([a,b]\) be the interval associated with an indecomposable \(A\)-module \(M\), and suppose its projective resolution is
\[
0 \longrightarrow [a_m,b_m] \xrightarrow{f_m} [a_{m-1},b_{m-1}] \longrightarrow \cdots \longrightarrow
[a_1,b_1] \xrightarrow{f_1} [a_0,b_0] \xrightarrow{f_0} [a,b] \longrightarrow 0.
\]
In general, by the combinatorial structure of projective resolutions in this setting, we have the chain of inequalities
\[
a_0< a_1< b_0< a_2<\cdots< a_i< b_{i-1}< a_{i+1}< b_i<\cdots< a_m< b_{m-1}=b_m.
\]

\noindent Let \([c,d]\) be a module such that \(\operatorname{Ext}^i([a,b],[c,d])\cong \mathbb{F}\).
If \(i<m\), then by the counting criterion established in Proposition~\ref{prop3.8}, this is equivalent to one of the following two configurations:
\[
\text{(I)}\quad a_{i-1} < c \le a_i \le d < a_{i+1},
\qquad\text{or}\qquad
\text{(II)}\quad 1\le c \le a_i,\; b_{i-1}+1 \le d < a_{i+1}.
\]

\noindent Fix \(j<i\). Then \(a_j<a_i\), hence \(a_{j+1}\le a_i\).
From (I)–(II) we have \(a_i\le d < a_{i+1}\) (in case (II), also \(a_i<b_{i-1}+1\le d < a_{i+1}\)), so in either case \(a_{j+1}\le d\).
Therefore the patterns that would force \(\operatorname{Ext}^j([a,b],[c,d])\cong \mathbb{F}\) (namely \(a_j\le d < a_{j+1}\) or \(a_j<b_{j-1}+1\le d < a_{j+1}\)) cannot occur, and hence
\(\operatorname{Ext}^j([a,b],[c,d])\not\cong \mathbb{F}\).

\noindent If \(j>i\), then \(a_{i+1}\le a_j\), so from (I)–(II) we have \(d < a_{i+1}\le a_j < b_{j-1}+1\).
Thus the configurations required for \(\operatorname{Ext}^j([a,b],[c,d])\cong \mathbb{F}\) again fail, and we conclude
\(\operatorname{Ext}^j([a,b],[c,d])\not\cong \mathbb{F}\).

\noindent If \(i=m\), then by Proposition~\ref{prop3.8} we have \(a_{m-1}< c \le a_m \le d \le b_m\).
For any \(j<m\) we have \(a_{j+1}\le a_m\le d\), so the necessary inequalities for \(\operatorname{Ext}^j([a,b],[c,d])\cong \mathbb{F}\) cannot be satisfied. Hence
\(\operatorname{Ext}^j([a,b],[c,d])\not\cong \mathbb{F}\) for all \(j\ne i\).
\par\bigskip 
\noindent Combining the three paragraphs above, we conclude that if \(\operatorname{Ext}^i([a,b],[c,d])\cong \mathbb{F}\) for some \(i\ge 1\), then
\(\operatorname{Ext}^j([a,b],[c,d])\cong 0\) for every \(j\ne i\).
\end{proof}

\begin{prop}\label{cor3.9}
Let \(M\) be an indecomposable module with associated interval \([a,b]\), with projective resolution given by 
\[
0 \longrightarrow [a_m,b_m] \xrightarrow{f_m} [a_{m-1},b_{m-1}] \longrightarrow \cdots \longrightarrow
[a_1,b_1] \xrightarrow{f_1} [a_0,b_0] \xrightarrow{f_0} [a,b] \longrightarrow 0.
\]
Then the number of interval modules \([c,d]\) such that \(\operatorname{Ext}^i([a,b],[c,d]) \cong \mathbb{F}\) for some \(i \ge 1\) is
\[
a_m(b_m+1)\;-\;a(b+1)\;+\;\sum_{i=1}^{m}\Bigl( 2a_i a_{i-1}\;-\;a_i^{2}\;-\;a_{i-1}(b_{i-1}+1) \Bigr).
\]
\end{prop}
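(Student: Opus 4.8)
The plan is to fix the indecomposable module $M = [a,b]$ together with its projective resolution
$0 \to [a_m,b_m] \to \cdots \to [a_0,b_0] \to [a,b] \to 0$,
and to count the interval modules $[c,d]$ for which $\operatorname{Ext}^i([a,b],[c,d]) \cong \mathbb{F}$ for \emph{some} $i \ge 1$. By Proposition~\ref{prop_extji} the events ``$\operatorname{Ext}^i([a,b],[c,d])\cong\mathbb{F}$'' for distinct $i$ are mutually exclusive, so the desired count is simply the sum over $i = 1,\dots,m$ of the number $N_i$ of intervals $[c,d]$ with $\operatorname{Ext}^i([a,b],[c,d])\cong\mathbb{F}$; there is no double counting to worry about. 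So the entire problem reduces to computing each $N_i$ and summing.

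The second step is to evaluate each $N_i$ using Proposition~\ref{prop3.8}. For $1 \le i \le m-1$ the defining conditions for $\operatorname{Ext}^i([a,b],[c,d])\cong\mathbb{F}$ are exactly $\operatorname{Hom}([a_{i+1},b_{i+1}],[c,d])\cong 0$, $\operatorname{Hom}([a_i,b_i],[c,d])\cong\mathbb{F}$, and $\operatorname{Hom}([a_{i-1},b_{i-1}],[c,d])\cong 0$; invoking the chain of inequalities
$a_0<a_1<b_0<a_2<\cdots<a_i<b_{i-1}<a_{i+1}<b_i<\cdots<b_{m-1}=b_m$
from Proposition~\ref{prop_extji}, item~(1) of Proposition~\ref{prop3.8} (with $a_1 \leftarrow a_{i-1}$, $b_1 \leftarrow b_{i-1}$, $a_2 \leftarrow a_i$, $a_3 \leftarrow a_{i+1}$) gives
$N_i = a_{i-1}\bigl(a_i-(b_{i-1}+1)\bigr) + a_i(a_{i+1}-a_i)$.
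For the top term $i = m$, only the two conditions $\operatorname{Hom}([a_{m-1},b_{m-1}],[c,d])\cong 0$ and $\operatorname{Hom}([a_m,b_m],[c,d])\cong\mathbb{F}$ apply; here $b_{m-1}=b_m$, so item~(2) of Proposition~\ref{prop3.8} applies with $a_1\leftarrow a_{m-1}$, $a_2\leftarrow a_m$, $b_1\leftarrow b_m$, yielding $N_m = (a_m-a_{m-1})(b_m+1-a_m)$.

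The last step is a telescoping bookkeeping computation. Writing the total as $N_m + \sum_{i=1}^{m-1} N_i$ and expanding $N_i = a_{i-1}a_i - a_{i-1}(b_{i-1}+1) + a_ia_{i+1} - a_i^2$, one groups the terms $a_{i-1}a_i + a_ia_{i+1}$ across consecutive indices so that the sum $\sum_{i=1}^{m-1}(a_{i-1}a_i + a_ia_{i+1})$ collapses to $\sum_{i=1}^{m}2a_ia_{i-1}$ minus boundary corrections coming from $i=0$ and $i=m$; after absorbing $N_m = a_ma_{m-1}-a_{m-1}(b_m+1)+a_m^2\cdot 0 \ldots$ — more precisely $N_m = a_m b_m + a_m - a_m^2 - a_{m-1}b_m - a_{m-1}$ — and reorganizing, the $a_m(b_m+1)$ and $-a(b+1)$ contributions appear as the surviving endpoint terms (using $a_0 = a$, $b_0 = b$ from the start of the resolution, or rather recognizing $[a_0,b_0]=P_0$ surjecting onto $[a,b]$ forces the relevant identifications), and the remaining sum takes the stated form $\sum_{i=1}^m\bigl(2a_ia_{i-1}-a_i^2-a_{i-1}(b_{i-1}+1)\bigr)$. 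I expect the main obstacle to be purely organizational: tracking the index shifts between the three-term pattern of Proposition~\ref{prop3.8}(1) and the resolution indexing, and correctly isolating which boundary terms survive the telescoping so that the closed form matches exactly; none of the individual algebraic manipulations is deep, but a sign or off-by-one slip in the telescoping is the easy trap.
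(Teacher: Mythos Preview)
Your proposal is correct and follows essentially the same approach as the paper: compute each $N_i$ via Proposition~\ref{prop3.8} (item~(1) for $1\le i\le m-1$, item~(2) for $i=m$ using $b_{m-1}=b_m$), invoke Proposition~\ref{prop_extji} for disjointness, and then sum. One small slip to fix in your telescoping step: it is \emph{not} true that $b_0=b$; rather $a_0=a$ and $a_1=b+1$ (the first syzygy of $[a,b]$ is $[b+1,b_0]$), so the boundary term $-a_0a_1$ that survives the telescoping is exactly $-a(b+1)$, which is how that piece of the closed formula arises.
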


\begin{proof}
The result follows by showing that
\[
E^i_{[a,b]} \;=\;
\begin{cases}
a\bigl(a_1-(b+1)\bigr), & \text{if } i=0,\\[4pt]
a_{i-1}\bigl(a_i-(b_{i-1}+1)\bigr) \;+\; a_i(a_{i+1}-a_i), & \text{if } 1\le i\le m-1,\\[4pt]
(a_m-a_{m-1})\bigl(b_m+1-a_m\bigr), & \text{if } i=m,
\end{cases}
\]
where \(E^i_{[a,b]}\) denotes the number of interval modules \([c,d]\) such that
\(\operatorname{Ext}^i([a,b],[c,d])\cong \mathbb{F}\), and where we use that \(a=a_0\) and \(b_m=b_{m-1}\).\par\smallskip

\noindent Indeed, for \(i=0\) we have
\(\operatorname{Ext}^0([a,b],[c,d])\cong \ker(f_1^*)/\operatorname{im}(f_0^*)\).
By Remark~\ref{remhom1} and Proposition~\ref{prop3.8}, this occurs precisely when
\[
\operatorname{Hom}([a,b],[c,d])\cong 0,\quad
\operatorname{Hom}([a_0,b_0],[c,d])\cong \mathbb{F},\quad
\operatorname{Hom}([a_1,b_1],[c,d])\cong 0,
\]
and the number of such intervals equals
\[
a\bigl(a_0-(b+1)\bigr)+a_0(a_1-a)\;=\;a_0a_1-a(b+1).
\]
Since \([a_0,b_0]\) is the projective cover of \([a,b]\), we have \(a_0=a\), hence the total is
\(a\bigl(a_1-(b+1)\bigr)\).

\noindent For \(1\le i\le m-1\), we have
\(\operatorname{Ext}^i([a,b],[c,d])\cong \ker(f_{i+1}^*)/\operatorname{im}(f_i^*)\),
which holds exactly when
\[
\operatorname{Hom}([a_{i-1},b_{i-1}],[c,d])\cong 0,\quad
\operatorname{Hom}([a_i,b_i],[c,d])\cong \mathbb{F},\quad
\operatorname{Hom}([a_{i+1},b_{i+1}],[c,d])\cong 0.
\]
By Proposition~\ref{prop3.8} the number of such intervals is
\[
a_{i-1}\bigl(a_i-(b_{i-1}+1)\bigr) \;+\; a_i(a_{i+1}-a_i).
\]

\noindent Finally, for \(i=m\) we have \(\ker(f_{m+1}^*)\cong\operatorname{Hom}([a_m,b_m],[c,d])\) since \(f_{m+1}^*=0\).
Moreover, \([a_m,b_m]\) is the projective cover of \(\ker(f_{m-1})\cong [\,b_{m-2}+1,\,b_{m-1}\,]\), and for the resolution to terminate \(\ker(f_{m-1})\) must be projective; hence
\([a_m,b_m]=[\,b_{m-2}+1,\,b_{m-1}\,]\) and thus \(b_m=b_{m-1}\).
Therefore \(\operatorname{Ext}^m([a,b],[c,d])\cong \mathbb{F}\) precisely when
\[
\operatorname{Hom}([a_{m-1},b_m],[c,d])\cong 0
\quad\text{and}\quad
\operatorname{Hom}([a_m,b_m],[c,d])\cong \mathbb{F},
\]
and by Proposition~\ref{prop3.8} the number of such intervals is
\((a_m-a_{m-1})(b_m+1-a_m)\).
\end{proof}

\subsection{Algorithm to compute the number of exceptional pairs}\label{alg}
To conclude this section, we describe an explicit algorithm (a sequence of steps) for computing the number of exceptional pairs. 
The following result is fundamental to our purpose, as it shows that any pair of modules satisfies exactly one of the two conditions considered in the previous subsections.

\begin{prop}\label{prop_homnotext}
Let \([a,b]\) and \([c,d]\) be the interval modules associated with two \(A\)-modules, and assume that \([a,b]\) is non-projective. Then:
\begin{enumerate}
  \item If \(\operatorname{Hom}([a,b],[c,d]) \cong \mathbb{F}\), then \(\operatorname{Ext}^i([a,b],[c,d]) \cong 0\) for all \(i \ge 1\).
  \item If \(\operatorname{Ext}^i([a,b],[c,d]) \cong \mathbb{F}\) for some \(i \ge 1\), then \(\operatorname{Hom}([a,b],[c,d]) \cong 0\).
\end{enumerate}
\end{prop}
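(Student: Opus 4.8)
The plan is to establish both statements simultaneously by analyzing the combinatorial structure of the projective resolution of $[a,b]$ together with the $\operatorname{Hom}$-criterion from Remark \ref{remhom1}. Since $[a,b]$ is non-projective, its minimal projective resolution
\[
0 \longrightarrow [a_m,b_m] \longrightarrow \cdots \longrightarrow [a_1,b_1] \longrightarrow [a_0,b_0] \longrightarrow [a,b] \longrightarrow 0
\]
is genuinely nontrivial ($m \ge 1$), and the chain of inequalities recorded in the proof of Proposition \ref{prop_extji},
\[
a = a_0 < a_1 < b_0 < a_2 < \cdots < a_i < b_{i-1} < a_{i+1} < b_i < \cdots < a_m < b_{m-1} = b_m,
\]
will be the main structural input. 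In particular $b_0 < b_1 < \cdots < b_{m-1} = b_m$ and, since $[a_0,b_0]=P(a)$ is the projective cover, $b_0 = i_{l_a}+p_{l_a} > b$, so $b < b_0$.

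For item (1), assume $\operatorname{Hom}([a,b],[c,d]) \cong \mathbb{F}$. By Remark \ref{remhom1} this means $c \le a \le d \le b$. I want to show $\operatorname{Ext}^i([a,b],[c,d]) \cong 0$ for every $i \ge 1$. Using the cohomological criterion stated just before Proposition \ref{prop3.8}, $\operatorname{Ext}^i([a,b],[c,d]) \cong \mathbb{F}$ would require $\operatorname{Hom}([a_i,b_i],[c,d]) \cong \mathbb{F}$, i.e.\ $c \le a_i \le d \le b_i$, and also $\operatorname{Hom}([a_{i-1},b_{i-1}],[c,d]) \cong 0$. But from $d \le b < b_0 \le b_{i-1}$ (for $i \ge 1$) together with $c \le a \le a_{i-1}$ — using $a = a_0 \le a_{i-1}$ — and $d \le b < b_0$, I need to check whether $\operatorname{Hom}([a_{i-1},b_{i-1}],[c,d])$ can actually be $0$; here one observes that $d \le b < b_0 \le b_{i-1}$ and $c \le a = a_0 \le a_{i-1}$, but the $\operatorname{Hom}$-vanishing conditions (F1)/(F2) of Proposition \ref{prophom} for the pair $([a_{i-1},b_{i-1}],[c,d])$ read ``no intersection'' or ``$b_{i-1} < d$ or $a_{i-1} < c$''. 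Since $a_{i-1} \ge a \ge c$ and $b_{i-1} \ge b_0 > b \ge d$, condition (F2) fails, so vanishing forces (F1): $[a_{i-1},b_{i-1}]$ and $[c,d]$ do not intersect, i.e.\ $d < a_{i-1}$ or $b_{i-1} < c$. The latter is impossible ($b_{i-1} > b \ge d \ge c$ would be needed... actually $b_{i-1} \ge b \ge c$), so $d < a_{i-1}$. Combined with the requirement $a_i \le d$ from $\operatorname{Hom}([a_i,b_i],[c,d]) \cong \mathbb{F}$ and $a_{i-1} < a_i$, this gives $a_i \le d < a_{i-1} < a_i$, a contradiction. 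Hence no such $i$ exists.

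For item (2), assume $\operatorname{Ext}^i([a,b],[c,d]) \cong \mathbb{F}$ for some $i \ge 1$. Then $\operatorname{Hom}([a_{i-1},b_{i-1}],[c,d]) \cong 0$ and $\operatorname{Hom}([a_i,b_i],[c,d]) \cong \mathbb{F}$, the latter giving $c \le a_i \le d \le b_i$. As in the argument above, $\operatorname{Hom}([a_{i-1},b_{i-1}],[c,d]) \cong 0$ together with $a_{i-1} < a_i \le d$ forces, via Proposition \ref{prophom}, that $b_{i-1} < c$ or $d < a_{i-1}$; since $d \ge a_i > a_{i-1}$ rules out $d < a_{i-1}$, we get $b_{i-1} < c$, hence in particular $c > b_{i-1} \ge b_0 > b$, so $c > b \ge a$ (using $b \ge a$), and therefore $c > a$. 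By Remark \ref{remhom1}, $\operatorname{Hom}([a,b],[c,d]) \cong \mathbb{F}$ would require $c \le a$, which now fails; thus $\operatorname{Hom}([a,b],[c,d]) \cong 0$.

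The main obstacle I anticipate is keeping the inequality bookkeeping clean — in particular making sure the chain $a_0 < a_1 < b_0 < a_2 < \cdots$ is invoked correctly at the right indices (the roles of $b_{i-1}$ versus $b_i$, and the boundary case $i = m$ where $b_m = b_{m-1}$ and the criterion only requires $\operatorname{Hom}(P_{m-1},[c,d]) \cong 0$). The case $i = m$ should be handled separately but is easier, since it needs strictly fewer vanishing conditions; the same contradiction ($b_{m-1} < c$ versus $c \le a \le b$) goes through verbatim. One should also double-check that the cohomological criterion quoted before Proposition \ref{prop3.8} is being applied with the correct indices when $i = 1$ (where $P_{i-1} = P_0$ is the projective cover), but this causes no trouble because $b_0 > b \ge d$ still holds.
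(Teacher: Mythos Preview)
Your argument for item~(1) is correct. In item~(2), however, there is a gap. When you write that ``$\operatorname{Hom}([a_{i-1},b_{i-1}],[c,d]) \cong 0$ \ldots\ forces \ldots\ $b_{i-1} < c$ or $d < a_{i-1}$'', you are invoking only condition~(F1) of Proposition~\ref{prophom} and silently dropping~(F2). The phrase ``as in the argument above'' is misleading: in item~(1) the hypothesis $c \le a \le d \le b$ allowed you to rule out~(F2) beforehand, but in item~(2) you have no such hypothesis. You must therefore also handle the (F2) possibilities $b_{i-1} < d$ and $a_{i-1} < c$. Both are harmless --- $b_{i-1} < d$ gives $d > b_{i-1} \ge b_0 > b$, while $a_{i-1} < c$ gives $c > a_{i-1} \ge a_0 = a$, and either inequality already contradicts $c \le a \le d \le b$ --- so the repair is immediate, but as written the deduction ``we get $b_{i-1} < c$'' is unjustified.

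More substantively, the paper's proof is much shorter because it uses one structural fact you never invoke: since $[a,b]$ is non-projective, its first syzygy is $[b+1,b_0]$, so that $a_1 = b+1$. With this single identity, item~(1) collapses to $d \le b < b+1 = a_1 \le a_i$, whence $\operatorname{Hom}([a_i,b_i],[c,d]) \cong 0$ for every $i \ge 1$; and item~(2) collapses to $d \ge a_i \ge a_1 = b+1 > b$, whence $\operatorname{Hom}([a,b],[c,d]) \cong 0$ directly. Your route through the full chain $a_0 < a_1 < b_0 < a_2 < \cdots$ and the (F1)/(F2) case analysis does work once patched, but recording $a_1 = b+1$ replaces all of it with two one-line observations.
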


\begin{proof}
Let \([a,b]\) and \([c,d]\) be the interval modules associated with two \(A\)-modules, and consider a projective resolution of \([a,b]\):
\[
0 \longrightarrow [a_m,b_m] \xrightarrow{f_m} [a_{m-1},b_{m-1}] \longrightarrow \cdots \longrightarrow
[a_1,b_1] \xrightarrow{f_1} [a_0,b_0] \xrightarrow{f_0} [a,b] \longrightarrow 0.
\]
First of all, assume that \(\operatorname{Hom}([a,b],[c,d]) \cong \mathbb{F}\). By Remark~\ref{remhom1} this is equivalent to
\(c \le a \le d \le b\).
Since \([a,b]\) is non-projective, its projective cover is \([a_0,b_0]\) with \(a_0=a\), and the next term in the resolution satisfies
\(a_1=b+1\).
Hence \(d < a_1\), and therefore \(d < a_i\) for every \(i \ge 1\).
It follows that \(\operatorname{Hom}([a_i,b_i],[c,d]) \cong 0\) for all \(i \ge 1\), which implies
\(\operatorname{Ext}^i([a,b],[c,d]) \cong 0\) for all \(i \ge 1\).

\noindent Conversely, suppose \(\operatorname{Ext}^i([a,b],[c,d]) \cong \mathbb{F}\) for some \(i \ge 1\).
By the description of \(\operatorname{Ext}^i\) via the projective resolution, this entails
\(\operatorname{Hom}([a_i,b_i],[c,d]) \cong \mathbb{F}\), i.e., \(c \le a_i \le d \le b_i\).
Since \(i \ge 1\), we have \(a_1=b+1 \le a_i \le d\), hence \(d \ge b+1\).
Therefore \(d > b\), and so \(\operatorname{Hom}([a,b],[c,d]) \cong 0\) by Remark~\ref{remhom1}.
\end{proof}

\noindent Using the results established in this section, we obtain the following algorithm to compute the number of exceptional pairs:

\begin{enumerate}
  \item Apply Theorem~\ref{thmind} to compute \(|\mathrm{Ind}(A)|\), the total number of indecomposable modules. Consequently, the total number of (ordered) pairs is \(|\mathrm{Ind}(A)|^2\).

  \item Apply Proposition~\ref{prophomab} to determine the number of pairs \(( M,N )\) such that \(\operatorname{Hom}(M,N) \cong \mathbb{F}\); denote this quantity by \(\mathcal{H}\).

  \item Use Proposition~\ref{cor3.9} to count the number of pairs \(( M,N )\) for which \(\operatorname{Ext}^i(M,N) \cong \mathbb{F}\) for some \(i \ge 1\); denote this quantity by \(\mathcal{E}\).

  \item By Propositions~\ref{prop_extji} and~\ref{prop_homnotext}, the sets counted in steps~2 and~3 are disjoint. Therefore, the total number of exceptional pairs equals
  \begin{equation} \label{key}
      |\mathrm{Ind}(A)|^2 \;-\; (\mathcal{H} + \mathcal{E}).
  \end{equation}
\end{enumerate}

\section{Some Representative Cases}\label{particularcases}

\subsection{Dynkin case of type $\mathbb{A}_n$ without relations}\label{sinrelaciones}
We now consider the concrete case of the standard (linear) orientation with no relations. 
This case was treated in the undergraduate thesis of O.~Bernal \cite{Bernal}, supervised by P. F. F. Espinosa, 
where the analysis proceeds by partitioning the set of indecomposable modules into subsets satisfying certain algebraic properties. 
In contrast, our approach is more combinatorial in spirit: we make essential use of the fact that the number of nonzero morphisms 
\(\operatorname{Hom}(M,M')\not\cong 0\) depends directly on the interval representing the corresponding module (see Remark~\ref{remhom1}).

\noindent Let \(M\) be an indecomposable \(\mathbb{A}_n\)-module represented by an interval \([a,b]\) with \(a \le b\).
As in Proposition~\ref{prophomab}, it is straightforward to verify that
\[
H_{[a,b]} = (b+1-a)(n+1-b).
\]
Consequently, the total number of pairs \((M,M')\) such that \(\operatorname{Hom}(M,M') \not\cong 0\) is
\begin{eqnarray*}
    \sum_{a=1}^n\sum_{b=a}^n(b+1-a)(n+1-b)&=&\sum_{a=1}^n\sum_{b=a}^n(b+1-a)(n-b)+\sum_{a=1}^n\sum_{b=a}^n(b+1-a)\\
    &=&\sum_{i=1}^n\sum_{j=1}^{n-i}ij+\frac{n(n+1)(n+2)}{6}\\
    &=&\frac{(n-1)n(n+1)(n+2)}{24}+\frac{n(n+1)(n+2)}{6}\\
    &=&\frac{n(n+1)(n+2)(n+3)}{24}= \binom{n+3}{4}.
\end{eqnarray*}

\noindent In the absence of relations, the projective resolution of an indecomposable module
\([a,b]\) over \(A\) is given by the short exact sequence of intervals
\[
0 \longrightarrow [b+1,n] \longrightarrow [a,n] \longrightarrow [a,b] \longrightarrow 0.
\]
By Proposition~\ref{cor3.9}, the total number of interval modules \([c,d]\) for which
\(\operatorname{Ext}^i([a,b],[c,d]) \not\cong 0\) for some \(i \ge 1\) equals
\[
(b+1)(n+1) - a(b+1) + 2(b+1)a - (b+1)^2 - a(n+1).
\]
A straightforward simplification shows that this expression equals
\[
(b+1-a)(n-b).
\]
Hence each \([a,b]\) contributes \((b+1-a)(n-b)\) ordered pairs. Summing over
\(1 \le a \le b \le n-1\), we obtain
\[
\sum_{a=1}^{n-1}\sum_{b=a}^{n-1} (b+1-a)(n-b)
\;=\; \frac{(n-1)n(n+1)(n+2)}{24}=\binom{n+2}{4}.
\]

\noindent Since the number of indecomposable modules in this case is $\frac{n(n+1)}{2}$, the algorithm of Subsection~\ref{alg} yields that the number of exceptional pairs equals

$$
\left( \frac{n(n+1)}{2} \right)^2-\binom{n+3}{4}-\binom{n+2}{4}=\frac{(n-1)n(n+1)^2}{6}.
$$


\subsection{Nakayama algebra of type I with ideal overlap-free}\label{nointer}
Let \(I\) be the ideal generated by \(k\) minimal valid relations such that, if
\(\alpha\) is an arrow appearing in the relation \(R_j\), then \(\alpha\) does not occur
in any other relation \(R_i\) for \(i\neq j\).
This condition is equivalent to requiring that, for every \(1\leq j\leq k\),
\[
  1 \leq p_j < i_{j+1}-i_j .
\]
We recall the partition of the indecomposable modules into the sets \(A_{P(i)}\),
for \(1\leq i\leq n\).
For each \(j\in\{1,\ldots,k\}\), we consider the subset
  $\displaystyle \bigcup_{i=i_{j-1}+1}^{\,i_j} A_{P(i)}$ .
A graphical representation of the corresponding portion of the Auslander–Reiten quiver
of \(A\) is as in Figure \ref{portion}.
\begin{figure}[h!]
    \begin{center}
\begin{tikzpicture}
[->,>=stealth',shorten >=1pt,auto,node distance=1.5cm and 2.2cm,thick,main node/.style=]
  \node[main node] (1) {$[i_j,i_j+p_j]$};   
 \node[main node] (2) [above right of=1] {$\ddots$};
  \node[main node] (3) [above right of=2] {$[i_{j-1}+1,i_j+p_j]$};
 \node[main node] (4) [below right of=1] {$\ddots$};
  \node[main node] (5) [below right of=4] {$[i_{j},i_{j}]$};
  \node[main node] (6) [above right of=5] {$\ddots$};
  \node[main node] (7) [below right of=3] {$\ddots$};
  \node[main node] (8) [below right of=7] {$[i_{j-1}+1,i_j]$};
  \node[main node] (9) [below right of=8] {$\ddots$};
  \node[main node] (10) [below right of=9] {$[i_{j-1}+1,i_{j-1}+1]$};

\path[every node/.style={font=\sffamily\small}]     
(1) edge node   {} (2)
 edge node   {} (4)
(2) edge node  {} (3)
(3) edge node   {} (7)
(4) edge node  {} (5)
(5) edge node   {} (6)
(6) edge node  {} (8)
(7) edge node  {} (8)
(8) edge node  {} (9)
(9) edge node  {} (10);
\end{tikzpicture}
\caption{A graphical representation of the Auslander–Reiten quiver of $A$ restricted to \( \bigcup_{i=i_{j-1}+1}^{\,i_j} A_{P(i)}\).}
\end{center}
\end{figure}
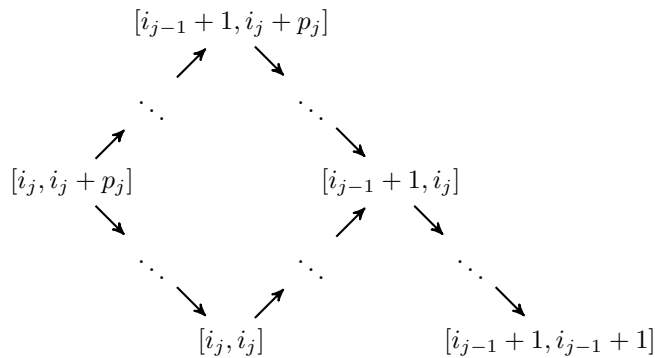\label{portion}

\noindent By Proposition~\ref{prophomab} we have
\[
H_{[a,b]} \;=\; (b+1-a)\,\bigl(i_j + p_j + 1 - b\bigr)
\quad\text{for}\quad
i_{j-1}+1 \le a \le b \le i_j,
\]
whereas, when \(a \le i_j < b \le i_{j+1}\),
\[
H_{[a,b]}
\;=\;
(i_j+p_j)\,(i_j+1-a)
\;+\;
(i_{j+1}+p_{j+1})\,(b-i_j)
\;+\;
(b+1-a)\,(1-b).
\]
For the first case $i_{j-1}+1 \le a \le b \le i_j$, we obtain:

\begin{align*}
\sum_{a=i_{j-1}+1}^{i_j}\sum_{b=a}^{i_j}(b+1-a)&(i_j+p_j+1-b)\\
&= \sum_{a=1}^{i_j-i_{j-1}}\sum_{b=a}^{i_j-i_{j-1}}
   (b+1-a)\bigl(i_j-i_{j-1}+p_j+1-b\bigr) \\[6pt]
&= \sum_{a=1}^{i_j-i_{j-1}}\sum_{b=a}^{i_j-i_{j-1}}
   (b+1-a)(i_j-i_{j-1}+1-b)
   + p_j\!\!\sum_{a=1}^{i_j-i_{j-1}}\sum_{b=a}^{i_j-i_{j-1}}\!(b+1-a) \\[6pt]
&= \frac{(i_j-i_{j-1})(i_j-i_{j-1}+1)(i_j-i_{j-1}+2)(i_j-i_{j-1}+3)}{24} \\
&\quad + p_j\,\frac{(i_j-i_{j-1})(i_j-i_{j-1}+1)(i_j-i_{j-1}+2)}{6} \\[6pt]
&= \binom{i_j - i_{j-1} + 3}{4} + p_j \binom{i_j - i_{j-1} + 2}{3}.
\end{align*}

\noindent For the second case, \(a \le i_j < b \le i_{j+1}\), we can split the expression into three terms:
\begin{eqnarray*}
\sum_{a=i_{j-1}+1}^{i_j}\sum_{b=i_j+1}^{i_j+p_j}(i_j+p_j)(i_j+1-a)&=&(i_j+p_j)p_jt_{i_j-i_{j-1}},\\
\sum_{a=i_{j-1}+1}^{i_j}\sum_{b=i_j+1}^{i_j+p_j}(i_{j+1}+p_{j+1})(b-i_j)&=&(i_{j+1}+p_{j+1})(i_j-i_{j-1})t_{p_j}
\end{eqnarray*}
where $t_n$ means the $n$-th triangular number and
\begin{align*}
\sum_{a=i_{j-1}+1}^{i_j}\sum_{b=i_j+1}^{i_j+p_j}(b-a+1)(1-b)
&= \sum_{a=1}^{i_j-i_{j-1}}\sum_{b=1}^{p_j}(b+1+(i_j-i_{j-1})-a)(1-b-i_j) \\[6pt]
&= \sum_{a=1}^{i_j-i_{j-1}}\sum_{b=1}^{p_j}\bigl(-b^2 + b(a+i_{j-1}-2i_j) + (a-i_j+i_{j-1}-1)(i_j-1)\bigr) \\[6pt]
&= \sum_{a=1}^{i_j-i_{j-1}}\Bigl[\frac{2p_j+1}{3}(-t_{p_j}) + t_{p_j}(a+i_{j-1}-2i_j) \\
&\qquad\qquad\qquad\qquad\qquad\qquad +\; p_j(a-i_j+i_{j-1}-1)(i_j-1)\Bigr] \\[6pt]
&= \sum_{a=1}^{i_j-i_{j-1}}\bigl[t_{p_j}+p_j(i_j-1)\bigr]a
   + t_{p_j}\Bigl(i_{j-1}-2i_j-\frac{2p_j+1}{3}\Bigr)\\ 
   & \qquad\qquad\qquad\qquad\qquad\qquad - p_j(i_j-i_{j-1}+1)(i_j-1) \\[6pt]
&= [t_{p_j}-p_j(i_j-1)]\,t_{i_j-i_{j-1}}
   + (i_j-i_{j-1})\,t_{p_j}\Bigl(i_{j-1}-2i_j-\frac{2p_j+1}{3}\Bigr),
\end{align*}

\noindent Let \(H_j\) denote the number of pairs of modules with a nonzero homomorphism
contributed by \(\displaystyle \bigcup_{i=i_{j-1}+1}^{\,i_j} A_{P(i)}\), for \(1\le j\le k\).
Then
\[
\begin{aligned}
H_j
&=
\binom{i_j - i_{j-1} + 3}{4}
\;+\;
p_j \binom{i_j - i_{j-1} + 2}{3}
\;+\;
3\,t_{p_j}\, t_{\,i_j - i_{j-1}}
\\[2mm]
&\quad
+\,
\bigl(i_j - i_{j-1}\bigr)\, t_{p_j}\!
\left(
  i_{j+1} + p_{j+1} + i_{j-1} - 2 i_j - \frac{2p_j + 1}{3}
\right).
\end{aligned}
\]

\noindent Finally, let \(H_{k+1}\) denote the number of pairs of modules with a nonzero
homomorphism contributed by
\(\displaystyle \bigcup_{i=i_k+1}^{\,n} A_{P(i)}\).
Then \(H_{k+1}\) is given by
\begin{eqnarray*}
    \sum_{a=i_k+1}^n\sum_{b=a}^n(b+1-a)(n+1-b)&=&\sum_{a=1}^{n-i_k}\sum_{b=a}^{n-i_k}(b+1-a)(n-i_k+1-b)\\
    &=&\binom{n-i_k+3}{4}.
\end{eqnarray*}
From the preceding computations, and recalling that we write \(t_m\) for the \(m\)-th
triangular number and adopt the conventions \(i_{k+1}=n\) and \(i_0=p_{k+1}=0\),
we obtain the following Theorem, which provides an explicit closed formula for \(\mathcal{H}\).

\begin{thm}\label{thm3.7}
Let \(A=\mathbb{F}Q/I\) be a path algebra and \(I\) the ideal generated by \(k\) minimal valid relations which are {overlap–free}. For each \(1\le j\le k\) let \(R_j=\alpha_{i_j}\alpha_{i_j+1}\cdots\alpha_{i_j+p_j}\). 
Then the number of pairs of modules \((M,N)\) such that \(\operatorname{Hom}_A(M,N)\cong\mathbb{F}\) is
\[
\mathcal{H}
\;=\;
\sum_{j=1}^{k+1}
\biggl[
\binom{i_j - i_{j-1} + 3}{4}
\;+\;
p_j\,\binom{i_j - i_{j-1} + 2}{3}
\;+\;
t_{p_j}\!\left(3\,t_{\,i_j - i_{j-1}} + (i_j - i_{j-1})\,g(j)\right)
\biggr],
\]
where the auxiliary function \(g(j)\) is defined by
\[
g(j) \;=\; i_{j+1} + p_{j+1} + i_{j-1} - 2\,i_j - \frac{2p_j+1}{3},
\qquad 1\le j \le k,
\]
and, by convention, \(g(k+1)=0\).
\end{thm}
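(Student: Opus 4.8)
The plan is to reduce the computation of $\mathcal{H}$ to a sum of local contributions, one for each of the $k+1$ blocks $\bigcup_{i=i_{j-1}+1}^{i_j}A_{P(i)}$ of the partition $\{A_{P(i)}\}_{i=1}^{n}$ of $\mathrm{Ind}(A)$ (with the conventions $i_0=p_{k+1}=0$ and $i_{k+1}=n$). Since $\mathcal{H}=\sum_{[a,b]\in\mathrm{Ind}(A)}H_{[a,b]}$ and the blocks are disjoint, it suffices to compute, for each $j$, the quantity $H_j:=\sum H_{[a,b]}$ with $[a,b]$ running over the modules in the $j$-th block, and then to check that $H_j$ coincides with the $j$-th summand of the claimed formula (reading $p_{k+1}=0$, $g(k+1)=0$ for the last block).

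First I would make the range of summation explicit: the modules in the $j$-th block are exactly the intervals $[a,b]$ with $i_{j-1}<a\le i_j$ and $a\le b\le i_j+p_j$, and I split them into the family with $b\le i_j$ and the family with $i_j<b\le i_j+p_j$. Here the overlap-free hypothesis $p_j<i_{j+1}-i_j$ enters in an essential way: it forces $i_j+p_j<i_{j+1}$, so in the second family $b$ always lies in the next segment $(i_j,i_{j+1}]$, whence $l_a=j$ and $l_b=j+1$ in the index notation of Proposition~\ref{prophomab}. Feeding these data into the formula of that proposition gives $H_{[a,b]}=(b-a+1)(i_j+p_j+1-b)$ on the first family and $H_{[a,b]}=(i_j+p_j)(i_j+1-a)+(i_{j+1}+p_{j+1})(b-i_j)+(b-a+1)(1-b)$ on the second.

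Next I would carry out the two summations. For the first family, the substitution $a\mapsto a-i_{j-1}$, $b\mapsto b-i_{j-1}$ turns the double sum into $\sum_{a=1}^{N}\sum_{b=a}^{N}(b+1-a)(N+1-b)+p_j\sum_{a=1}^{N}\sum_{b=a}^{N}(b+1-a)$ with $N=i_j-i_{j-1}$; the first of these is precisely the identity evaluated in Subsection~\ref{sinrelaciones} and equals $\binom{N+3}{4}$, while the second is the standard sum $\binom{N+2}{3}$, so the first family contributes $\binom{i_j-i_{j-1}+3}{4}+p_j\binom{i_j-i_{j-1}+2}{3}$. For the second family I would break the triple sum into its three additive pieces: the first two collapse at once using $\sum_{a=i_{j-1}+1}^{i_j}(i_j+1-a)=t_{i_j-i_{j-1}}$ and $\sum_{b=i_j+1}^{i_j+p_j}(b-i_j)=t_{p_j}$, giving $(i_j+p_j)p_j\,t_{i_j-i_{j-1}}$ and $(i_{j+1}+p_{j+1})(i_j-i_{j-1})\,t_{p_j}$, while the third piece $\sum\sum(b-a+1)(1-b)$ is the genuinely laborious step, handled by re-indexing $b\mapsto b-i_j$, $a\mapsto a-i_{j-1}$ and using $\sum_{b=1}^{p}b^2=\tfrac{2p+1}{3}t_p$; this piece equals $[\,t_{p_j}-p_j(i_j-1)\,]\,t_{i_j-i_{j-1}}+(i_j-i_{j-1})\,t_{p_j}\!\left(i_{j-1}-2i_j-\tfrac{2p_j+1}{3}\right)$.

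The main obstacle is precisely this last piece together with the cancellation that follows. After adding the three pieces, the coefficient of $t_{i_j-i_{j-1}}$ is $(i_j+p_j)p_j+t_{p_j}-p_j(i_j-1)$, and the key point is that this simplifies to $p_j^{2}+p_j+t_{p_j}=3\,t_{p_j}$ via $p_j^{2}+p_j=2t_{p_j}$, while the remaining terms assemble into $(i_j-i_{j-1})\,t_{p_j}\,g(j)$ with $g(j)=i_{j+1}+p_{j+1}+i_{j-1}-2i_j-\tfrac{2p_j+1}{3}$. Hence $H_j=\binom{i_j-i_{j-1}+3}{4}+p_j\binom{i_j-i_{j-1}+2}{3}+t_{p_j}\bigl(3\,t_{i_j-i_{j-1}}+(i_j-i_{j-1})g(j)\bigr)$ for $1\le j\le k$. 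I would then observe that this same expression specializes correctly to $H_{k+1}=\binom{n-i_k+3}{4}$ under the conventions $p_{k+1}=0$, $i_{k+1}=n$, $g(k+1)=0$ (the second family is empty and $t_{p_{k+1}}=0$), so the formula is uniform in $j$; summing $\mathcal{H}=\sum_{j=1}^{k+1}H_j$ yields the stated closed form.
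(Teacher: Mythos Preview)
Your proposal is correct and follows essentially the same route as the paper: the same block decomposition $\bigcup_{i=i_{j-1}+1}^{i_j}A_{P(i)}$, the same split into the two families $b\le i_j$ and $i_j<b\le i_j+p_j$, the same application of Proposition~\ref{prophomab}, and the same three-piece treatment of the second family. Your account is in fact slightly more explicit than the paper's in two places: you spell out why the overlap-free condition forces $l_b=j+1$ in the second family, and you exhibit the cancellation $(i_j+p_j)p_j+t_{p_j}-p_j(i_j-1)=p_j^2+p_j+t_{p_j}=3\,t_{p_j}$ that the paper leaves implicit when passing to the closed form of $H_j$.
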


\noindent Now we would like to calculate $\mathcal{E}$. To do this, we need to see what form the projective resolution of the module has. Let $[a,b]$ be the interval representing an indecomposable module. We have that if $i_{j-1}+1\leq a\leq b\leq i_j-1$, then its projective resolution is of the form:
$$0\rightarrow[b+1,i_j+p_j]\rightarrow[a,i_j+p_j]\rightarrow[a,b]\rightarrow0,$$
on the other hand $i_{j-1}+1\leq a\leq i_j\leq b\leq i_j+p_j-1$
$$0\rightarrow[i_j+p_j+1,i_{j+1}+p_{j+1}]\rightarrow[b+1,i_{j+1}+p_{j+1}]\rightarrow[a,i_j+p_j]\rightarrow[a,b]\rightarrow0.$$
Note that if $b=i_j+p_j$, for $P$ the module corresponding to $[a,b]$ is projective, so  $\operatorname{Ext}^i(P,M)\cong0$ for all modules  $M$. Since the sets \(A_{P(i)}\) form a partition of the indecomposable \(A\)-modules, 
we compute the number of pairs contributed by the blocks
\(\displaystyle \bigcup_{i=i_{j-1}+1}^{\,i_j} A_{P(i)}\) for \(1\le j\le k+1\).
This contribution is established in the next two lemmas.

\begin{lem}\label{lemma3.8}
Let \(A=\mathbb{F}Q/I\) be a path algebra and let \(I\) be the ideal generated by \(k\) minimal valid relations which are \emph{overlap–free}. For each \(1\le j\le k\) let \(R_j=\alpha_{i_j}\alpha_{i_j+1}\cdots\alpha_{i_j+p_j}\), and  the number \(\mathcal{E}_j\) of pairs of modules \((M,M')\) such that
\(\operatorname{Ext}^l_A(M,M') \cong\mathbb{F}\) for some \(l \ge 1\), with
\(M \in \bigcup_{i=i_{j-1}+1}^{\,i_j} A_{P(i)}\), is given by
\[
\mathcal{E}_j
=
\binom{i_j - i_{j-1} + 2}{4}
\;+\;
p_j \binom{i_j - i_{j-1} + 1}{3}
\;+\;
t_{p_j}\!\Big( (i_j-i_{j-1})\,(g(j)-1) + 3\,t_{\,i_j-i_{j-1}} \Big),
\]
where
\[
g(j)= i_{j+1}+p_{j+1}+i_{j-1}-2i_j-\frac{2p_j+1}{3}.
\]
\end{lem}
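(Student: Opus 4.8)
The plan is to partition the block $\bigcup_{i=i_{j-1}+1}^{i_j}A_{P(i)}$ according to the shape of the projective resolution of each of its intervals, to apply Proposition~\ref{cor3.9} to each piece, and to add up. Recall that the intervals in this block are exactly the $[a,b]$ with $i_{j-1}+1\le a\le i_j$ and $a\le b\le i_j+p_j$; thanks to the overlap-free hypothesis $p_j<i_{j+1}-i_j$ (which forces $i_j+p_j+1\le i_{j+1}$), each of them lies in exactly one of three pairwise disjoint, jointly exhaustive classes: (i) $b=i_j+p_j$, in which case $[a,b]=P(a)$ is projective and contributes $0$ to $\mathcal{E}_j$; (ii) $i_{j-1}+1\le a\le b\le i_j-1$, where the resolution has length one, $0\to[b+1,i_j+p_j]\to[a,i_j+p_j]\to[a,b]\to0$; (iii) $i_{j-1}+1\le a\le i_j\le b\le i_j+p_j-1$, where it has length two, $0\to[i_j+p_j+1,i_{j+1}+p_{j+1}]\to[b+1,i_{j+1}+p_{j+1}]\to[a,i_j+p_j]\to[a,b]\to0$. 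In both nonprojective cases Proposition~\ref{prop_extji} guarantees that the number furnished by Proposition~\ref{cor3.9} really is the number of intervals $[c,d]$ with $\operatorname{Ext}^l([a,b],[c,d])\cong\mathbb{F}$ for some $l\ge1$, with no overcounting across distinct $l$. Throughout I write $n_j:=i_j-i_{j-1}$.

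For class (ii) I would feed $m=1$, $a_0=a$, $b_0=i_j+p_j$, $a_1=b+1$, $b_1=i_j+p_j$ into the formula of Proposition~\ref{cor3.9}; after cancellation it collapses, exactly as in Subsection~\ref{sinrelaciones} with $n$ replaced by $i_j+p_j$, to $(b+1-a)(i_j+p_j-b)$. Shifting indices by $i_{j-1}$, reordering the summation, and evaluating the inner sum over $a$ turns the class-(ii) contribution into $\sum_{b=1}^{n_j-1}t_b\bigl((n_j-b)+p_j\bigr)$. The hockey-stick identities $\sum_{b=1}^{n_j-1}t_b=\binom{n_j+1}{3}$ and $\sum_{b=1}^{n_j-1}(n_j-b)\,t_b=\binom{n_j+2}{4}$ then evaluate this to $\binom{n_j+2}{4}+p_j\binom{n_j+1}{3}$.

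For class (iii) I would feed $m=2$, $a_0=a$, $b_0=i_j+p_j$, $a_1=b+1$, $b_1=b_2=i_{j+1}+p_{j+1}$, $a_2=i_j+p_j+1$ into Proposition~\ref{cor3.9}; the quadratic terms combine into $-(i_j+p_j-b)^2$, leaving $E_{[a,b]}=(i_j+p_j-b)\bigl(i_{j+1}+p_{j+1}+1-a-(i_j+p_j-b)\bigr)$. Introducing $v=i_j+p_j-b$, which runs through $1,\dots,p_j$, and $w=i_{j+1}+p_{j+1}+1-a$, which runs through $n_j$ consecutive integers whose largest value is $i_{j+1}+p_{j+1}-i_{j-1}$, the class-(iii) contribution becomes $\sum_w\sum_{v=1}^{p_j}v(w-v)=t_{p_j}\bigl(\sum_w w\bigr)-n_j\sum_{v=1}^{p_j}v^2$. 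Using $\sum_{v=1}^{p_j}v^2=\tfrac{2p_j+1}{3}\,t_{p_j}$ and evaluating $\sum_w w$ as a sum of $n_j$ consecutive integers, this equals $t_{p_j}\bigl(n_j\bigl(i_{j+1}+p_{j+1}-i_j-\tfrac{2p_j+1}{3}\bigr)+t_{n_j}\bigr)$; substituting $i_{j-1}=i_j-n_j$ into the definition of $g(j)$ and using $n_j(n_j+1)=2t_{n_j}$ rewrites this as $t_{p_j}\bigl(n_j(g(j)-1)+3t_{n_j}\bigr)$. Adding the contributions of classes (ii) and (iii), with class (i) contributing $0$, yields the claimed formula for $\mathcal{E}_j$.

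The work is essentially all bookkeeping, and it is concentrated in two spots. First, one must verify that the length-two resolution in class (iii) terminates exactly where claimed: this is precisely where the overlap-free condition is needed, since $p_j<i_{j+1}-i_j$ gives $i_j+p_j+1\le i_{j+1}$ and hence $[i_j+p_j+1,i_{j+1}+p_{j+1}]=P(i_j+p_j+1)$ is projective. Second, one must handle the index shifts in class (iii) carefully, so that the sum of $n_j$ consecutive integers collapses correctly and the reindexed expression matches $n_j(g(j)-1)+3t_{n_j}$ on the nose; the identity $n_j(n_j+1)=2t_{n_j}$ is what reconciles the two forms. Everything else is routine manipulation of triangular numbers and binomial coefficients.
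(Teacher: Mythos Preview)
Your proof is correct and follows essentially the same route as the paper: split the block into projective intervals, short-resolution intervals ($b\le i_j-1$), and long-resolution intervals ($i_j\le b\le i_j+p_j-1$), apply Proposition~\ref{cor3.9} to each nonprojective piece, and sum. The only difference is in the bookkeeping of the double sums---you use the hockey-stick identities and the substitution $(v,w)=(i_j+p_j-b,\,i_{j+1}+p_{j+1}+1-a)$, whereas the paper expands and simplifies directly---but the argument is the same.
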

\begin{proof}    
Let \(M\) be an indecomposable module represented by the interval \([a,b]\).
Assume that \(i_{j-1}+1 \le a \le b \le i_j-1\).
It follows from Proposition~\ref{cor3.9} that the total number of modules \([c,d]\)
for which \(\mathrm{Ext}^i([a,b],[c,d]) \cong\mathbb{F}\) for some \(i \ge 1\) is given by
\[
(b+1)(i_j+p_j+1) - a(b+1) + 2(b+1)a - (b+1)^2 - a(i_j+p_j+1).
\]
Hence, the module represented by \([a,b]\) contributes
\[
(i_j + p_j - b)\,(b+1 - a).
\]
Since \(i_{j-1}+1 \le a \le b \le i_j-1\), the total contribution of all such intervals \([a,b]\)
is
\[
\sum_{a=i_{j-1}+1}^{\,i_j-1}\;\sum_{b=a}^{\,i_j-1} (i_j + p_j - b)\,(b+1 - a),
\]
which, after simplification, can be expressed in closed form as
\begin{eqnarray*}
    \sum_{a=i_{j-1}+1}^{i_j-1}\sum_{b=a}^{i_j-1}(i_j+p_j-b)(b-a+1)&=&\binom{i_j - i_{j-1} + 2}{4}+ p_j \binom{i_j - i_{j-1} + 1}{3}
\end{eqnarray*}
On the other hand, for the case 
\(i_{j-1}+1 \le a \le i_j \le b \le i_j+p_j-1\),
the projective resolution of the module \([a,b]\) is
\[
0 \longrightarrow [\,i_j+p_j+1,\, i_{j+1}+p_{j+1}\,]
\longrightarrow [\,b+1,\, i_{j+1}+p_{j+1}\,]
\longrightarrow [\,a,\, i_j+p_j\,]
\longrightarrow [\,a,b\,]
\longrightarrow 0.
\]
As in the previous case, it follows from Proposition~\ref{cor3.9} that
the number of modules \([c,d]\) such that
\(\mathrm{Ext}^i([a,b],[c,d]) \cong\mathbb{F}\) for some \(i \ge 1\) is
\[
\begin{aligned}
&(i_j+p_j+1)(i_{j+1}+p_{j+1}+1)
- a(b+1)
+ 2\bigl((b+1)a + (b+1)(i_j+p_j+1)\bigr) \\[1ex]
&\quad
- (b+1)^2
- (i_j+p_j+1)^2
- a(i_j+p_j+1)
- (b+1)(i_{j+1}+p_{j+1}+1).
\end{aligned}
\]
\noindent After straightforward simplifications, this expression reduces to
\[
(i_j+p_j-b)\,\bigl(b+1 - a + i_{j+1}+p_{j+1} - (i_j+p_j)\bigr).
\]

\noindent Since \(i_{j-1}+1 \le a \le i_j \le b \le i_j+p_j-1\), the contribution of this case 

\begin{align*}
&\sum_{a=i_{j-1}+1}^{i_j}\sum_{b=i_j}^{i_j+p_j-1}(i_j+p_j-b)(b+1-a+i_{j+1}+p_{j+1}-(i_j+p_j))\\
&= \sum_{a=1}^{i_j-i_{j-1}}\sum_{b=1}^{p_j}(p_j+1-b)(b-a+i_{j+1}+p_{j+1}-(i_{j-1}+p_j)) \\[6pt]
&= \sum_{a=1}^{i_j-i_{j-1}}\sum_{b=1}^{p_j}
   \Bigl[-b^2 + b\bigl(2p_j+i_{j-1}+1+a-(i_{j+1}+p_{j+1})\bigr) \\
&\qquad\qquad\qquad + (p_j+1)\bigl(i_{j+1}+p_{j+1}-(i_{j-1}+p_j+a)\bigr)\Bigr] \\[6pt]
&= t_{p_j}\sum_{a=1}^{i_j-i_{j-1}}
   \Bigl[i_{j+1}+p_{j+1}+1-\Bigl(\tfrac{2p_j+1}{3}+i_{j-1}+a\Bigr)\Bigr] \\[6pt]
&= t_{p_j}(i_j-i_{j-1})
   \Bigl[i_{j+1}+p_{j+1}+1-\Bigl(\tfrac{2p_j+1}{3}+i_{j-1}\Bigr)\Bigr]
   - t_{p_j}t_{\,i_j-i_{j-1}} \\[6pt]
&= t_{p_j}\!\Bigl((i_j-i_{j-1})\,(g(j)-1) + 3\,t_{\,i_j-i_{j-1}}\Bigr),
\end{align*}

\end{proof}

\noindent On the other hand, it is straightforward to see that if 
\(M' \in \bigcup_{i=i_{j-1}+1}^{\,i_j} A_{P(i)}\) and 
\(M \in \bigcup_{i=i_k+1}^{\,n} A_{P(i)}\),
then \(\operatorname{Ext}^l_A(M, M') \cong 0\) for all \(l \ge 1\).
Hence, in order for \(\operatorname{Ext}^l_A(M, M') \cong \mathbb{F}\) for some \(l \ge 1\),
both \(M\) and \(M'\) must lie in 
\(\bigcup_{i=i_k+1}^{\,n} A_{P(i)}\).
Therefore, the total number of pairs contributed by this terminal block is
\[
\sum_{a=i_k+1}^{\,n-1}\;\sum_{b=a}^{\,n-1} (b+1-a)\,(n-b)
\;=\;
\sum_{a=1}^{\,n-i_k-1}\;\sum_{b=a}^{\,n-i_k-1} (b+1-a)\,(n-i_k-b)
\;=\;
\binom{n-i_k+2}{4}.
\]

\begin{lem}\label{lemma3.9}
Let \(A=\mathbb{F}Q/I\) as above.
The total contribution of pairs \((M,M')\) with \(\operatorname{Ext}^l_A(M,M')\cong\mathbb{F}\) for some \(l\ge 1\),
coming from the set
\(\displaystyle \bigcup_{i=i_k+1}^{\,n} A_{P(i)}\),
is
    $$\mathcal{E}_{k+1}=\binom{n-i_k+2}{4}.$$
\end{lem}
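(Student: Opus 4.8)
The plan is to reduce the computation entirely to the relation-free case already carried out in Subsection~\ref{sinrelaciones}. First I would record the structural observation made immediately before the statement: if $M=[a,b]$ lies in the terminal block $\bigcup_{i=i_k+1}^{n}A_{P(i)}$, then $a\ge i_k+1$, and every syzygy $[c,d]$ appearing in a projective resolution of $M$ has left endpoint $c\ge a\ge i_k+1$ (the left endpoints strictly increase along the resolution). Since $\operatorname{Ext}^l_A(M,M')\cong\mathbb{F}$ for some $l\ge 1$ forces $\operatorname{Hom}([c,d],M')\cong\mathbb{F}$ for a syzygy $[c,d]$, which by Remark~\ref{remhom1} requires the left endpoint of $M'$ to be $\ge i_k+1$ as well, it follows that $M'$ also lies in the terminal block. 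Hence $\mathcal{E}_{k+1}$ counts exactly the ordered pairs of modules, both supported inside $\{i_k+1,\dots,n\}$, for which some higher $\operatorname{Ext}$ is $\mathbb{F}$, with no contribution straddling two different blocks.

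Next I would observe that on the vertex set $\{i_k+1,\dots,n\}$ the algebra $A$ carries no relations (this is the block corresponding to $j=k+1$, with the conventions $i_{k+1}=n$, $p_{k+1}=0$), so locally the combinatorics is that of a linearly oriented $\mathbb{A}_{n-i_k}$ quiver. Concretely, for $M=[a,b]$ with $i_k+1\le a\le b\le n$: if $b=n$ then $M=[a,n]=P(a)$ is projective and contributes nothing; if $b\le n-1$, its projective resolution is $0\to[b+1,n]\to[a,n]\to[a,b]\to 0$, and applying Proposition~\ref{cor3.9} with $m=1$, $a_0=a$, $b_0=n$, $a_1=b+1$, $b_1=n$ and simplifying shows that $[a,b]$ contributes $(n-b)(b+1-a)$ ordered pairs — the same expression found in the relation-free case of Subsection~\ref{sinrelaciones}.

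Finally I would sum over the terminal block and re-index:
\[
\mathcal{E}_{k+1}=\sum_{a=i_k+1}^{n-1}\sum_{b=a}^{n-1}(n-b)(b+1-a)
=\sum_{a=1}^{n-i_k-1}\sum_{b=a}^{n-i_k-1}\bigl((n-i_k)-b\bigr)(b+1-a),
\]
and the last double sum is precisely the one evaluated in Subsection~\ref{sinrelaciones} with $n$ replaced by $n-i_k$, whose value is $\binom{(n-i_k)+2}{4}=\binom{n-i_k+2}{4}$. This gives the claim. The computation is routine; the only points requiring a little care are checking that the terminal block is genuinely relation-free (so that Proposition~\ref{cor3.9} applies with a length-one resolution and the projective modules $[a,n]$ drop out) and that no $\operatorname{Ext}$-pair straddles two blocks, which is exactly the observation made at the outset.
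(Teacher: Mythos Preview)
Your approach is essentially the paper's: restrict to the relation-free terminal block, use the length-one projective resolution $0\to[b+1,n]\to[a,n]\to[a,b]\to0$ to get the per-module contribution $(n-b)(b+1-a)$, re-index the double sum, and read off $\binom{n-i_k+2}{4}$ from the computation in Subsection~\ref{sinrelaciones}. This is exactly what the paragraph immediately preceding the lemma does.

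There is one slip in your ``no straddling'' argument. From $\operatorname{Hom}([c,d],M')\cong\mathbb{F}$ with $[c,d]$ a syzygy, Remark~\ref{remhom1} gives $a'\le c\le b'\le d$, i.e.\ only an \emph{upper} bound on the left endpoint $a'$ of $M'$, not the lower bound $a'\ge i_k+1$ you claim. What actually pins $M'$ inside the terminal block is the companion condition $\operatorname{Hom}(P_{l-1},M')\cong 0$: with $P_0=[a,n]$ and $P_1=[b+1,n]$, the analysis of Proposition~\ref{prop3.8}(2) shows that the only way to kill $\operatorname{Hom}([a,n],M')$ while keeping $\operatorname{Hom}([b+1,n],M')\cong\mathbb{F}$ is $a'>a\ge i_k+1$. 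With this correction the argument is complete and matches the paper.
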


\noindent Using \(p_{k+1}=0\) and Lemmas~\ref{lemma3.8} and~\ref{lemma3.9}, we conclude:

\begin{thm}\label{thm3.10}
Let \(A=\mathbb{F}Q/I\) be a path algebra and \(I\) the ideal generated by \(k\) minimal valid relations which are \emph{overlap–free}. For each \(1\le j\le k\) let \(R_j=\alpha_{i_j}\alpha_{i_j+1}\cdots\alpha_{i_j+p_j}\).

\noindent Then the number of pairs of indecomposable modules \((M,M')\) such that
\(\operatorname{Ext}^l(M,M')\cong \mathbb{F}\) for some \(l\ge 1\), is
\[
\mathcal{E}
=
\sum_{j=1}^{k+1}
\biggl[
\binom{i_j - i_{j-1} + 2}{4}
+
p_j \binom{i_j - i_{j-1} + 1}{3}
+
t_{p_j}\!\bigl((i_j-i_{j-1})(g(j)-1)+3\,t_{\,i_j-i_{j-1}}\bigr)
\biggr],
\]
where
\[
g(j)
=
i_{j+1}+p_{j+1}+i_{j-1}-2\,i_j-\frac{2p_j+1}{3},
\qquad 1\le j\le k,
\]
and, by convention, \(g(k+1)=0\).
\end{thm}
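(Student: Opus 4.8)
The plan is to obtain $\mathcal{E}$ by summing, over a suitable partition of $\mathrm{Ind}(A)$, the block contributions already isolated in Lemmas~\ref{lemma3.8} and~\ref{lemma3.9}. Recall that $\{A_{P(i)}\mid 1\le i\le n\}$ partitions $\mathrm{Ind}(A)$; since $i_0=0$ and $i_{k+1}=n$, the coarser family
\[
\mathcal{B}_j\;:=\;\bigcup_{i=i_{j-1}+1}^{\,i_j}A_{P(i)},\qquad 1\le j\le k+1,
\]
is again a partition of $\mathrm{Ind}(A)$. Consequently every indecomposable $M$ lies in exactly one block $\mathcal{B}_j$, and the set of ordered pairs $(M,M')$ with $\operatorname{Ext}^l(M,M')\cong\mathbb{F}$ for some $l\ge 1$ decomposes, according to the block containing the first coordinate $M$, as a disjoint union. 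This yields $\mathcal{E}=\sum_{j=1}^{k+1}\mathcal{E}_j$ with no double counting, where $\mathcal{E}_j$ denotes the number of such pairs with $M\in\mathcal{B}_j$.

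Next I would substitute the explicit values. For $1\le j\le k$, Lemma~\ref{lemma3.8} gives
\[
\mathcal{E}_j=\binom{i_j-i_{j-1}+2}{4}+p_j\binom{i_j-i_{j-1}+1}{3}+t_{p_j}\!\bigl((i_j-i_{j-1})(g(j)-1)+3\,t_{\,i_j-i_{j-1}}\bigr),
\]
and for the terminal block Lemma~\ref{lemma3.9} gives $\mathcal{E}_{k+1}=\binom{n-i_k+2}{4}$. I would emphasize here the one point that needs care: the counts in Lemma~\ref{lemma3.8} — obtained via Proposition~\ref{cor3.9}, which already rules out repetition across different homological degrees by Proposition~\ref{prop_extji} — range the target interval over all of $\mathrm{Ind}(A)$, and the projective modules of $\mathcal{B}_j$, namely those of the form $[a,\,i_j+p_j]$, contribute $0$ because $\operatorname{Ext}^l$ vanishes on projectives; hence $\mathcal{E}_j$ genuinely counts \emph{all} pairs with first coordinate in $\mathcal{B}_j$, so summing over $j$ omits and repeats nothing.

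Finally I would check that $\mathcal{E}_{k+1}=\binom{n-i_k+2}{4}$ coincides with the $j=k+1$ instance of the general summand under the stated conventions $i_0=p_{k+1}=0$, $i_{k+1}=n$ and $g(k+1)=0$: one has $\binom{i_{k+1}-i_k+2}{4}=\binom{n-i_k+2}{4}$, while $p_{k+1}\binom{i_{k+1}-i_k+1}{3}=0$ and $t_{p_{k+1}}\bigl(\cdots\bigr)=t_0\bigl(\cdots\bigr)=0$, the latter two vanishing simply because $p_{k+1}=0$ (so the value of $g(k+1)$ is immaterial here). Plugging these in turns $\mathcal{E}=\sum_{j=1}^{k+1}\mathcal{E}_j$ into exactly the claimed closed form.

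The proof involves no genuine computation beyond these substitutions; the only real obstacle is the bookkeeping in the first paragraph, i.e.\ making precise that Lemma~\ref{lemma3.8} classifies the exceptional-in-$\operatorname{Ext}$ pairs solely by the block of the first coordinate, so that the partition argument is airtight and the terms for $1\le j\le k$ and for $j=k+1$ can legitimately be folded into a single sum over $1\le j\le k+1$.
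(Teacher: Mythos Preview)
Your proposal is correct and follows exactly the paper's approach: the paper's proof of this theorem is a one-line appeal to Lemmas~\ref{lemma3.8} and~\ref{lemma3.9} together with the convention \(p_{k+1}=0\), and you have simply fleshed out the partition-by-first-coordinate bookkeeping and the verification that the terminal block matches the general summand. There is nothing to add.
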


\noindent The following theorem consolidates the results of this section and provides
an explicit procedure to compute the number of exceptional pairs.
\begin{thm}\label{thmnointer}
Let \(A=\mathbb{F}Q/I\) be a path algebra and let \(I\) be the ideal generated by \(k\)
minimal valid relations which are \emph{overlap–free}. For each \(1\le j\le k\) let \(R_j=\alpha_{i_j}\alpha_{i_j+1}\cdots\alpha_{i_j+p_j}\). Then the number of exceptional pairs is

\begin{align*}
&\Biggl[\Biggl(\sum_{j=1}^{k+1} (i_j - i_{j-1})(i_j + p_j)\Biggr) - t_{n-1}\Biggr]^2 \\[4pt]
&\quad - \sum_{j=1}^{k+1}
   \Biggl[\binom{i_j - i_{j-1} + 3}{4}
   + \binom{i_j - i_{j-1} + 2}{4}
   + p_j\!\left(\binom{i_j - i_{j-1} + 2}{3}
   + \binom{i_j - i_{j-1} + 1}{3}\right)\Biggr] \\[4pt]
&\quad - \sum_{j=1}^{k+1}
   \Bigl[6\,t_{\,i_j - i_{j-1}} + (i_j - i_{j-1})(2g(j) - 1)\Bigr] t_{p_j}
\end{align*}

\noindent where the auxiliary function \(g(\,\cdot\,)\) is given by
\[
g(j)=i_{j+1}+p_{j+1}+i_{j-1}-2\,i_j-\frac{2p_j+1}{3},
\qquad 1\le j\le k,
\]
and, by convention  \(g(k+1)=0\) and \(i_0=0\), \(i_{k+1}=n\), \(p_{k+1}=0\).
\end{thm}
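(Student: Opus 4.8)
The plan is to apply the algorithm of Subsection~\ref{alg} verbatim. By that algorithm, together with Propositions~\ref{prop_extji} and~\ref{prop_homnotext}, the number of exceptional pairs over $A$ equals
\[
|\mathrm{Ind}(A)|^{2} - (\mathcal{H} + \mathcal{E}),
\]
where $\mathcal{H}$ is the number of pairs $(M,N)$ of indecomposable modules with $\operatorname{Hom}(M,N)\cong\mathbb{F}$, and $\mathcal{E}$ is the number of pairs with $\operatorname{Ext}^{l}(M,N)\cong\mathbb{F}$ for some $l\ge 1$. Proposition~\ref{prop_extji} ensures that $\mathcal{E}$ counts each such pair exactly once although the defining condition ranges over $l\ge 1$; Proposition~\ref{prop_homnotext} ensures that the two families are disjoint when the first argument is non-projective, while pairs with projective first argument never occur in $\mathcal{E}$. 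Thus the proof reduces to inserting the explicit values of $|\mathrm{Ind}(A)|$, $\mathcal{H}$ and $\mathcal{E}$ valid in the overlap-free setting and simplifying.

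First I would evaluate $|\mathrm{Ind}(A)|$ by Theorem~\ref{thmind}, under the conventions $i_{0}=p_{k+1}=0$ and $i_{k+1}=n$. Since $\frac{n(n-1)}{2}=t_{n-1}$, this gives
\[
|\mathrm{Ind}(A)| \;=\; \Biggl(\sum_{j=1}^{k+1}(i_{j}-i_{j-1})(i_{j}+p_{j})\Biggr) - t_{n-1},
\]
so that $|\mathrm{Ind}(A)|^{2}$ is precisely the squared expression in the first line of the claimed formula.

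Next I would substitute the closed forms of $\mathcal{H}$ and $\mathcal{E}$ established for overlap-free ideals, namely Theorem~\ref{thm3.7} and Theorem~\ref{thm3.10}, and add them termwise over $1\le j\le k+1$. The binomial summands combine to
\[
\binom{i_{j}-i_{j-1}+3}{4}+\binom{i_{j}-i_{j-1}+2}{4}
+p_{j}\!\left(\binom{i_{j}-i_{j-1}+2}{3}+\binom{i_{j}-i_{j-1}+1}{3}\right),
\]
which is the second line; and the $t_{p_{j}}$ summands combine, via the identity
\[
3t_{\,i_{j}-i_{j-1}}+(i_{j}-i_{j-1})g(j)+3t_{\,i_{j}-i_{j-1}}+(i_{j}-i_{j-1})(g(j)-1)
=6t_{\,i_{j}-i_{j-1}}+(i_{j}-i_{j-1})(2g(j)-1),
\]
into the third line. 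Subtracting $\mathcal{H}+\mathcal{E}$ from $|\mathrm{Ind}(A)|^{2}$ then produces exactly the displayed expression, which would finish the proof.

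All the genuine content has already been carried out in the earlier results: the knitting description of the Auslander--Reiten quiver underlying the block decomposition $\mathrm{Ind}(A)=\bigsqcup_{j=1}^{k+1}\bigcup_{i=i_{j-1}+1}^{i_{j}}A_{P(i)}$, legitimate precisely because the ideal is overlap-free (equivalently $1\le p_{j}<i_{j+1}-i_{j}$), together with the closed evaluations of the double sums in Theorem~\ref{thm3.7} and Lemmas~\ref{lemma3.8}--\ref{lemma3.9}. The only point that will require care is the consistent bookkeeping of the conventions $i_{0}=0$, $i_{k+1}=n$, $p_{k+1}=0$, $g(k+1)=0$, under which the $j=k+1$ summands must reduce to the terminal-block contributions $\binom{n-i_{k}+3}{4}$ and $\binom{n-i_{k}+2}{4}$; as a consistency check, specializing to $k=0$ should recover the count $\frac{(n-1)n(n+1)^{2}}{6}$ of Subsection~\ref{sinrelaciones}.
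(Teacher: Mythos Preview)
Your proposal is correct and follows essentially the same approach as the paper: the paper's own proof is the single sentence ``It follows by the algorithm of Section~\ref{alg} and Theorems~\ref{thmind}, \ref{thm3.7} and \ref{thm3.10},'' and you have simply unpacked that sentence, including the termwise combination of $\mathcal{H}+\mathcal{E}$ that the paper leaves implicit.
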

\begin{proof}
    Its follow by algorithm of section \ref{alg} and Theorems \ref{thmind}, \ref{thm3.7} and \ref{thm3.10}.
\end{proof}

\noindent When in Theorem \ref{thmnointer} we assume that there is a single relation  starting at vertex $1$ and having length $m=1+p$, hereafter denoted by $A_{R_{(1,m)}}$, we have the following result:

\begin{cor}\label{cor1}
The number of exceptional pairs of an algebra of type $A_{R_{(1,m)}}$ is: 
\[\frac{1}{6}\left(4m^3-3m^2(2 n+1)+m\left(6 n^2-1\right)+(n-2)(n-1) n^2\right)\]
\end{cor}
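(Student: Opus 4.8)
The plan is to specialize Theorem~\ref{thmnointer} to the case $k=1$, $i_1=1$, $p_1=p=m-1$, and then simplify. With a single relation $R=\alpha_1\cdots\alpha_{1+p}$ starting at vertex $1$, the relevant parameters are $i_0=0$, $i_1=1$, $i_2=n$, $p_1=m-1$, $p_2=0$, so the two blocks in the sums of Theorem~\ref{thmnointer} are $j=1$ (with $i_1-i_0=1$) and $j=2$ (with $i_2-i_1=n-1$). First I would record the auxiliary values: $g(1)=i_2+p_2+i_0-2i_1-\tfrac{2p+1}{3}=n-2-\tfrac{2(m-1)+1}{3}=n-2-\tfrac{2m-1}{3}$, and $g(2)=0$ by convention. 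Also $t_{p_1}=t_{m-1}=\binom{m}{2}$, $t_{p_2}=t_0=0$, $t_{i_1-i_0}=t_1=1$, and $t_{i_2-i_1}=t_{n-1}=\binom{n}{2}$.

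Next I would evaluate each of the three pieces of the formula separately. For the squared term, $\sum_{j=1}^{2}(i_j-i_{j-1})(i_j+p_j)=1\cdot(1+(m-1))+(n-1)\cdot(n+0)=m+n(n-1)$, so the squared term is $\bigl(m+n(n-1)-\binom{n}{2}\bigr)^2=\bigl(m+\tfrac{n(n-1)}{2}\bigr)^2$ (this should match $|\mathrm{Ind}(A_{R_{(1,m)}})|^2$ via Theorem~\ref{thmind}). For the binomial sum, the $j=1$ terms with $i_j-i_{j-1}=1$ give $\binom{4}{4}+\binom{3}{4}+p\bigl(\binom{3}{3}+\binom{2}{3}\bigr)=1+0+p(1+0)=1+p=m$, and the $j=2$ terms with $i_j-i_{j-1}=n-1$ and $p_2=0$ give $\binom{n+2}{4}+\binom{n+1}{4}$. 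For the last sum, the $j=1$ term is $\bigl(6t_1+(1)(2g(1)-1)\bigr)t_{m-1}=\bigl(6+2g(1)-1\bigr)\binom{m}{2}=\bigl(5+2g(1)\bigr)\binom{m}{2}$, while the $j=2$ term vanishes since $t_{p_2}=0$.

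Then I would assemble
\[
\#\{\text{exc.\ pairs}\}=\Bigl(m+\tfrac{n(n-1)}{2}\Bigr)^2-\Bigl(m+\binom{n+2}{4}+\binom{n+1}{4}\Bigr)-\bigl(5+2g(1)\bigr)\binom{m}{2},
\]
substitute $g(1)=n-2-\tfrac{2m-1}{3}$ so that $5+2g(1)=5+2n-4-\tfrac{4m-2}{3}=\tfrac{3(2n+1)-4m+2}{3}=\tfrac{6n+5-4m}{3}$, giving $\bigl(5+2g(1)\bigr)\binom{m}{2}=\tfrac{(6n+5-4m)m(m-1)}{6}$. It remains to expand everything, use $\binom{n+2}{4}+\binom{n+1}{4}=\tfrac{(n-1)n(n+1)(n+2)}{24}+\tfrac{(n-2)(n-1)n(n+1)}{24}=\tfrac{(n-1)n(n+1)\cdot 2n}{24}=\tfrac{(n-1)n^2(n+1)}{12}$, and collect into a single polynomial in $m$ and $n$. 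The target expression can be grouped as $\tfrac16\bigl(4m^3-3m^2(2n+1)+m(6n^2-1)+(n-2)(n-1)n^2\bigr)$, so I would verify the constant (in $m$) term is $\tfrac{(n-2)(n-1)n^2}{6}$, which comes from $\bigl(\tfrac{n(n-1)}{2}\bigr)^2-\tfrac{(n-1)n^2(n+1)}{12}=\tfrac{n^2(n-1)^2}{4}-\tfrac{(n-1)n^2(n+1)}{12}=\tfrac{(n-1)n^2\bigl(3(n-1)-(n+1)\bigr)}{12}=\tfrac{(n-1)n^2(2n-4)}{12}=\tfrac{(n-2)(n-1)n^2}{6}$, and then check the coefficients of $m^3$, $m^2$, and $m^1$ agree. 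The main obstacle is purely bookkeeping: keeping the several rational contributions over a common denominator of $6$ and not dropping a term when expanding $\bigl(m+\tfrac{n(n-1)}{2}\bigr)^2$ and the cubic $m(m-1)(6n+5-4m)/6$; there is no conceptual difficulty since everything is a direct substitution into Theorem~\ref{thmnointer}.
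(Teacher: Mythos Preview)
Your proposal is correct and follows essentially the same approach as the paper: both specialize the overlap--free formulas to the single relation with $k=1$, $i_1=1$, $p_1=m-1$ and then simplify. The only organizational difference is that the paper computes $\mathcal{H}$ and $\mathcal{E}$ separately from Theorems~\ref{thm3.7} and~\ref{thm3.10} before combining, whereas you plug directly into the consolidated Theorem~\ref{thmnointer}; the algebra and the final verification are the same.
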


\begin{proof}
According to Theorem \ref{thmind}, the number of indecomposable modules $\mathrm{Ind}(A_{R_{(1,m)}})$ is given by $\left(m+\frac{n(n-1)}{2}\right)$ and therefore the total number of possible pairs of indecomposable modules is $\left(m+\frac{n(n-1)}{2}\right)^2$. \par 

\noindent Similarly, according to Theorem \ref{thm3.7}, the number $\mathcal{H}$ of pairs $(M,N)$ such that $\mathrm{Hom}(M,N)\cong\mathbb{F}$ is 
$$m(m-n)+\frac{m(m+1)(3n+2-2m)}{6}+\frac{(n-1)(n)(n+1)(n+2)}{24}$$
and according to Theorem \ref{thm3.10}, the number $\mathcal{E}$ of pairs $(M,N)$ such that $\mathrm{Ext}(M,N)\cong\mathbb{F}$ is

$$m(m-1)\left(\frac{3n-2m+1}{6}\right)+\frac{(n-2)(n-1)(n)(n+1)}{24}$$

\noindent Hence, the number of exceptional pairs of $A_{R_{(1,m)}}$ is given by: 

\begin{multline*}
|\mathrm{Ind}(A)|^2-(\mathcal{H}+\mathcal{E})= \left(m+\frac{n(n-1)}{2}\right)^2-m(m-1)\left(\frac{3n-2m+1}{6}\right)-\frac{(n-2)(n-1)(n)(n+1)}{24}\\ 
-\left(m(m-n)+\frac{m(m+1)(3n+2-2m)}{6}+\frac{(n-1)(n)(n+1)(n+2)}{24}\right)
\end{multline*}
which becomes: 

\begin{multline*}
 = \left(m+\frac{n(n-1)}{2}\right)^2-m(m-1)\left(\frac{3n-2m+1}{6}\right)-\frac{(n-2)(n-1)(n)(n+1)}{24}\\ 
-m(m-n)-\frac{m(m+1)(3n+2-2m)}{6}-\frac{(n-1)(n)(n+1)(n+2)}{24}
\end{multline*}
or equivalently

\begin{multline*}
 =\frac{6m^2+12mn^2-6mn+3n^4-6n^3+3n^2-6m^2n+4m^3+2m}{12}+\frac{n^2-n^4}{12}\\ +\frac{-6m^2+2m^3+3nm-3nm^2-2m}{6}
\end{multline*}
\begin{multline*}
=\frac{3m^2+6mn^2-3mn+n^4-3n^3+2n^2-3m^2n+2m^3+m}{6}\\+\frac{-6m^2+2m^3+3nm-3nm^2-2m}{6}
\end{multline*}

\noindent which simplifies to 
 \[|\mathrm{Ind}(A)|^2-(\mathcal{H}+\mathcal{E})=\frac{4m^3-3m^2(2n+1)+m(6n^2-1)+n^2(n-2)(n-1)}{6}\]

\end{proof}

\normalcolor
    
\subsection{Auslander Algebras}\label{Auslanderalg}

\noindent Recall that according to \cite{Chen}  $A$ is the Auslander algebra of a radical square zero algebra of type $\mathbb{A}_n$.
Then $A$ is given by the following quiver $Q$ :
$$
1 \stackrel{a_1}{\rightarrow} 2 \stackrel{a_2}{\rightarrow} \cdots \stackrel{a_{2 m-3}}{\rightarrow} 2 m-2 \stackrel{a_{2 m-2}}{\rightarrow} 2 m-1
$$
\noindent with the relations:
$$
a_{2 j-1} a_{2 j}=0 \quad (1 \leq j \leq m-1)
$$

\noindent  In this way,  a few straightforward calculations show that 
\( p_j = 1 \) for \( 1 \leq j \leq m-1 \), \( p_m = 0 \), 
\( i_1 - i_0 = 1 \), and \( i_j - i_{j-1} = 2 \) for \( 2 \leq j \leq m \). 
Moreover, the auxiliary function is determined by
\[
g(j) =
\begin{cases}
0, & 2 \leq j \leq m-2, \\[4pt]
1, & j = 1, \\[4pt]
-1, & j = m-1.
\end{cases}
\]
Therefore, from Theorem~\ref{thmind}, \ref{thm3.7} and  \ref{thm3.10}, we obtain that the number of indecomposable modules, $\mathcal{H}$ and $\mathcal{E}$ are respectively
\begin{eqnarray*}
    |\mathrm{Ind}A|&=&\sum_{l=1}^{m} (i_l - i_{l-1})(i_l + p_l) 
- \frac{(2m-1)(2m-2)}{2}
= 5(m-1),\\
\mathcal{H}&=&\left\{\begin{array}{lc}
    10 &  m=2\\
    18m-27 & m\geq 3
\end{array}\right.\\
\mathcal{E}&=&\left\{\begin{array}{lc}
    3 &  m=2\\
    9m-16 & m\geq 3
\end{array}\right.
\end{eqnarray*}
Consequently, the number of exceptional pairs is $12$ for $m=2$ and for $m\geq 3$:
\[
25(m-1)^2-(18m-27)-(9m-16)\;=\;25m^2-77m+68.
\]

\section{About radical square zero Nakayama algebra}\label{rad2}

In this section, we follow the algorithm presented in the end of Section \ref{General} to find out the number expectional pairs for any radical square zero Nakayama algebra. To do that, we enumerate the number of indecomposable modules and we find the values of $\mathcal{H}$ and $\mathcal{E}$.

\begin{thm}\label{Indrad}
The number of indecomposable modules over a Nakayama Algebra with $n$ vertices and a relation of type  $\mathrm{Rad}^k$ with $k<n$ is $k\cdot n-t_{k-1}$
\end{thm}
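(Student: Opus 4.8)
The plan is to identify the indecomposable modules as a restricted family of interval modules and then count them directly. Recall from Section~\ref{General} that over a Nakayama algebra of type~I the indecomposables are exactly the interval modules $[a,b]$ appearing in the blocks $A_{P(i)}$, and that $P(i)=[i,\,i_j+p_j]$ whenever $i_{j-1}<i\le i_j$. For the ideal $I=\mathrm{Rad}^k$ (all paths of length $k$ vanish), the minimal generating relations start at the vertices $1,2,\dots,n-k$ and each has length $k$, so the projective at a vertex $a$ is $P(a)=[a,\,\min\{a+k-1,\,n\}]$. Consequently, $[a,b]$ represents an indecomposable $A$-module if and only if $1\le a\le b\le n$ and $b-a\le k-1$.

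First I would count the pairs $(a,b)$ with these constraints by fixing the left endpoint $a$. If $1\le a\le n-k+1$, then $b$ can be any of $a,a+1,\dots,a+k-1$, which gives $k$ choices; if $n-k+2\le a\le n$, then $b$ ranges over $a,a+1,\dots,n$, giving $n-a+1$ choices. Adding these contributions,
\[
|\mathrm{Ind}(A)| \;=\; k(n-k+1)\;+\;\sum_{a=n-k+2}^{n}(n-a+1)\;=\;k(n-k+1)\;+\;\sum_{j=1}^{k-1}j\;=\;k(n-k+1)+t_{k-1}.
\]
Since $k(k-1)=2t_{k-1}$, the right-hand side equals $kn-k(k-1)+t_{k-1}=kn-2t_{k-1}+t_{k-1}=kn-t_{k-1}$, which is the asserted formula; moreover this argument is valid for every $1\le k<n$.

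As a cross-check (valid when $k\ge 3$, so that the relations are minimal in the sense required by Theorem~\ref{thmind}), one may instead apply Theorem~\ref{thmind} directly: here there are $n-k$ relations with $i_l=l$ and $p_l=k-1$ for $1\le l\le n-k$, together with the conventions $i_0=0$, $i_{n-k+1}=n$, $p_{n-k+1}=0$. Feeding these into $\sum_{l}(i_l-i_{l-1})(i_l+p_l)-\frac{n(n-1)}{2}$ --- the first $n-k$ summands contribute $l+k-1$ and the last contributes $kn$ --- one again obtains $kn-t_{k-1}$ after a short simplification. I do not expect a genuine obstacle here; the only point that needs a little attention is the boundary case $a>n-k+1$, where the length-$k$ relation no longer fits and hence $P(a)$ is strictly shorter than $k$, which is precisely what splits the sum over $a$ into its two ranges.
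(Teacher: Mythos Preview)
Your proof is correct. Your primary argument --- directly characterising the indecomposables as the intervals $[a,b]$ with $b-a\le k-1$ and then counting by the left endpoint --- is more elementary than the paper's proof, which applies Theorem~\ref{thmind} with the data $i_l=l$, $p_l=k-1$ for $1\le l\le n-k$ and then simplifies the resulting sum (this is exactly your cross-check). The direct count has the advantage of being self-contained and of working uniformly for all $k$ (including $k=2$, where the paper's conventions on the $p_l$ are a bit awkward); the paper's approach, on the other hand, illustrates Theorem~\ref{thmind} in action and is the template used for the later computations of $\mathcal{H}$ and $\mathcal{E}$. Your remark about the boundary range $a>n-k+1$ is exactly the right point to flag.
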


\begin{proof}
For this case, we have $n-k$ relations of length $k$ of the form $\alpha_i\alpha_{i+1}\cdots\alpha_{i+k-1}$. From Theorem \ref{thmind}, it follows that:
\begin{eqnarray*}
    |\mathrm{Ind} (A)|&=&\sum_{l=1}^{n-k+1}(i_l-i_{l-1})(i_l+p_l)-t_{n-1}\\
    &=&\left(\sum_{l=1}^{n-k}(1)(k-1+l)\right)+(n-(n-k))(n)-t_{n-1}\\
    &=&\left(\sum_{l=k}^{n-1}l\right)+kn-t_{n-1}\\
    &=&t_{n-1}-t_{k-1}+kn-t_{n-1}\\
    &=& kn-t_{k-1}.
\end{eqnarray*}
\end{proof}

\noindent Now, for the values of $\mathcal{H}$ and $\mathcal{E}$ we got the following result: 

\begin{thm}\label{Hrad2}
Let $A = \mathbb{F}Q/I$ be an algebra, where $Q$ is a Dynkin quiver of type $\mathbb{A}_n$ with a linear orientation, and $I$ is an admissible ideal generated by all possible relations of length two then $\mathcal{H}= 5n-5$ and $\mathcal{E}= (n-1)+(n-2)+t_{n-3}$
\end{thm}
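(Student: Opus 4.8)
The plan is to apply the general overlap-free machinery of Theorems~\ref{thm3.7} and~\ref{thm3.10} to the radical square zero case, where $I$ is generated by all relations of length two. This means $k = n-2$ relations $R_j = \alpha_j \alpha_{j+1}$ of length $p_j = 1$ starting at vertex $i_j = j$ for $1 \le j \le n-2$, together with the conventions $i_0 = 0$, $i_{n-1} = n$, and $p_{n-1} = 0$. The key observation is that this is an overlap-free configuration: since $p_j = 1 < i_{j+1} - i_j = 1$ fails for generic consecutive relations, one must check carefully that the length-two-relations-everywhere case still fits (or falls under the natural limiting interpretation of) the overlap-free framework — this discrepancy is exactly why the statement is treated separately and will be the first thing to verify or work around. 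Granting that, I would compute $i_j - i_{j-1}$: it equals $1$ for $j = 1$, equals $1$ for $2 \le j \le n-2$... in fact $i_j - i_{j-1} = 1$ for all $1 \le j \le n-2$ and $i_{n-1} - i_{n-2} = n - (n-2) = 2$.

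Next I would evaluate the auxiliary function $g(j) = i_{j+1} + p_{j+1} + i_{j-1} - 2i_j - \tfrac{2p_j+1}{3}$ for each block. For $2 \le j \le n-3$ (where all neighboring data are the generic $p = 1$ values), $g(j) = (j+1) + 1 + (j-1) - 2j - 1 = 0$. At the boundaries one gets different values: for $j = 1$, using $i_0 = 0$, $g(1) = 2 + 1 + 0 - 2 - 1 = 0$; and for $j = n-2$, using $i_{n-1} = n$ and $p_{n-1} = 0$, $g(n-2) = n + 0 + (n-3) - 2(n-2) - 1 = 0$. So $g(j) = 0$ for all $1 \le j \le n-2$, and by convention $g(n-1) = 0$. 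This uniformity is what makes the triangular-number terms collapse cleanly.

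Then I would substitute into the closed formulas. For $\mathcal{H}$ from Theorem~\ref{thm3.7}: each block $1 \le j \le n-2$ has $i_j - i_{j-1} = 1$ and $p_j = 1$, contributing $\binom{4}{4} + 1 \cdot \binom{3}{3} + t_1(3 t_1 + 1 \cdot g(j)) = 1 + 1 + (3 + 0) = 5$; the final block $j = n-1$ has $i_{n-1} - i_{n-2} = 2$, $p_{n-1} = 0$, contributing $\binom{5}{4} + 0 + t_0(\cdots) = 5$. Summing: $\mathcal{H} = 5(n-2) + 5 = 5n - 5$. For $\mathcal{E}$ from Theorem~\ref{thm3.10}: each block $1 \le j \le n-2$ contributes $\binom{3}{4} + 1 \cdot \binom{2}{3} + t_1((1)(g(j)-1) + 3 t_1) = 0 + 0 + (1 \cdot (-1) + 3) = 2$; but wait — I must recheck whether the terminal relation block and the extreme blocks genuinely contribute $2$ each or whether boundary effects in the projective resolutions (the $i_{n-1}$ and $i_0$ edges) alter the $g$-dependent term. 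The final block $j = n-1$ contributes $\binom{4}{4} + 0 + t_0(\cdots) = 1$. This gives $\mathcal{E} = 2 \cdot (\text{number of generic blocks}) + (\text{boundary corrections}) + 1$, which should be reorganized as $(n-1) + (n-2) + t_{n-3}$ — so the arithmetic must be arranged to split the $2$-per-block total plus the $1$ from the last block into that particular sum, which suggests the generic blocks do not all contribute a flat $2$ and that I have mishandled a boundary. The main obstacle, then, is pinning down the boundary blocks $j = 1$ and $j = n-2$ correctly: the projective resolutions there are shorter (the relation near vertex $n$ terminates differently), so Lemma~\ref{lemma3.8}'s two-case split must be re-examined at the ends, and the bookkeeping that produces the asymmetric-looking answer $(n-1) + (n-2) + t_{n-3}$ rather than a single binomial coefficient is where the real care is required. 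Once the per-block contributions to $\mathcal{E}$ are correctly tabulated, summing them and matching to $(n-1) + (n-2) + t_{n-3}$ is routine algebra; the analogous check for $\mathcal{H} = 5n - 5$ is comparatively painless since every block there contributes $5$.
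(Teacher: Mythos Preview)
Your approach has a genuine gap, and it is not a boundary issue. The radical square zero case is \emph{not} overlap--free: the relations $R_j=\alpha_j\alpha_{j+1}$ and $R_{j+1}=\alpha_{j+1}\alpha_{j+2}$ share the arrow $\alpha_{j+1}$, and the numerical criterion $p_j<i_{j+1}-i_j$ reads $1<1$. You flag this but then proceed as if it were only a technicality. It is not. The overlap--free hypothesis in Lemma~\ref{lemma3.8} and Theorem~\ref{thm3.10} is what forces every projective resolution to have length at most~$2$; in the radical square zero algebra the simple $S(j)$ has resolution $0\to P(n)\to P(n-1)\to\cdots\to P(j)\to S(j)\to 0$ of length $n-j$, and the higher $\mathrm{Ext}$ groups this produces are invisible to your formula. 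Concretely, your computation gives $\mathcal{E}=2(n-2)+1=2n-3=(n-1)+(n-2)$, which is missing exactly the $t_{n-3}$ term. That term is the count of pairs $(S(i),S(j))$ with $2\le i<j\le n-1$, for which $\mathrm{Ext}^{j-i}(S(i),S(j))\cong\mathbb{F}$; these are all higher $\mathrm{Ext}$'s, so no tweak at the boundary blocks will recover them.

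The paper does not try to force the general formula through. It argues directly: there are only $2n-1$ indecomposables (the simples $S(i)$ and the projectives $P(i)=[i,i+1]$), and one tabulates the nonzero $\mathrm{Hom}$'s and $\mathrm{Ext}$'s by hand. For $\mathcal{H}$ a short case split ($M=M$; $P(i)\to P(i-1)$; $P(i)\to S(i)$; $S(i)\to P(i-1)$) gives $5n-5$, and your Theorem~\ref{thm3.7} number happens to agree here since $\mathrm{Hom}$ is insensitive to resolution length. For $\mathcal{E}$ the paper uses the long resolution of each $S(i)$ and splits into three cases: $\mathrm{Ext}(S(i),P(n))$ contributes $n-1$, $\mathrm{Ext}(S(1),S(i))$ contributes $n-2$, and $\mathrm{Ext}(S(i),S(j))$ with $2\le i<j\le n-1$ contributes $t_{n-3}$. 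If you want to salvage your route, you must abandon Theorem~\ref{thm3.10} and instead apply Proposition~\ref{cor3.9} directly to the full resolutions of the $S(j)$.
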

\begin{proof}
We consider that a radical squared zero Nakayama algebra with $n$ vertices has the following Auslander-Reiten quiver: 
\par\bigskip
\begin{tikzpicture}[
  node distance=1cm and 1cm,
  every node/.style={font=\small},
  every path/.style={->, >=stealth, thick}
  ]

\node (n) at (-7,4) {$[n,n]$};
\node (n-1) at (-4,4) {$[n-1,n-1]$};
\node (n-2) at (0,4) {$[n-2,n-2]$};
\node (4) at (2,4) {$\cdots$};
\node (2) at (3.5,4) {$[2,2]$};
\node (1) at (6.5,4) {$[1,1]$};


\node (n-1n) at (-5.5,6)  {$[n-1,n]$};
\node (n-2n-1) at (-2,6) {$[n-2,n-1]$};
\node (5) at (1.7,6) {$\cdots$};
\node (12) at (5,6) {[1,2]};
\draw (2) -- (12);
\draw (12) -- (1);
\draw (n-2) -- (5);
\draw (5) -- (2);
\draw (n) -- (n-1n);
\draw (n-1n) -- (n-1);
\draw (n-1) -- (n-2n-1);
\draw (n-2n-1) -- (n-2);
\end{tikzpicture}

\noindent Note that we have $2n-1$ indecomposable modules of which $n-1$ are projective-injective ($[i,i+1]$, for $1\leq i\leq n-1$), $1$ is pure projective($[n,n]$), $1$ pure injective ($[1,1]$) and $n-2$ simples. 

\noindent  We divide the proof in two parts. In the first one we calculate $\mathcal{H}$ ans in the second we compute $\mathcal{E}$. 
\begin{itemize}
    \item For the number $\mathcal{H}$ it is enough to consider the following cases for the expression $\mathrm{Hom}(L,M)$:

    \begin{enumerate}
        \item When $L=M$. In this case, since there are $2n-1$   indecomposable module so this case contributes with $2n-1$ to the value of $\mathcal{H}$.
        
        \item When $L$ and $M$ are two different projective modules. According to Remark \ref{remhom1} $\mathrm{Hom}(P(i),P(j))$ is different to zero if $j=i-1$ and it is possible for $2 \leq i \leq n$ which means that this case contributes with $n-1$ to the value of  $\mathcal{H}$. 
        
        \item When $L$ is projective and $M$ is a simple module. In concordance with Remark \ref{remhom1} $\mathrm{Hom}(P(i),S(j))$ is different to zero if and only if $j=i$ and it is possible for $1\leq i \leq n-1$ which means that this case contributes with $n-1$ to the value of  $\mathcal{H}$. 
        
        \item  When $L$ is a simple module and $M$ is a projective module. According to Remark \ref{remhom1} $\mathrm{Hom}(S(i),P(j))$ is different to zero if $i=j+1$ and it is possible for $2 \leq i \leq n-1$ which means that this case contributes with $n-2$ to the value of  $\mathcal{H}$.
    \end{enumerate} 

    \noindent Adding all the cases described above $\mathcal{H}=(2n-1)+(n-1)+(n-1)+(n-2)=5n-5$.

    \par\bigskip

    \item For the number $\mathcal{E}$ considering the basic properties of $\mathrm{Ext}$ functor for projective and injective modules it is enough consider the following cases for the expression $\mathrm{Ext}(L,M)$: \par\bigskip

    \begin{enumerate}
        \item $\mathrm{Ext}(S(i),P(n))$ where $S(i)$ denotes the simple module associated to the vertex $i$ and $P(n)$ denotes the projective module associated to $n$. In this case, we consider the projective resolution of $S(i)$
\[
0 \to P(n) \to P(n-1) \to P(n-2) \to \cdots \to P(i+1) \to P(i) \to S(i) \to 0
\]

Applying the functor $ \textrm{Hom}_{A}(-,P(n))$,  we have the exact sequence
\[
0 \to \textrm{Hom}_{A}(P(i),P(n)) \to  \textrm{Hom}_{A}(P(i+1),P(n)) \to \cdots \to  \textrm{Hom}_{A}(P(n),P(n)) \to 0.
\]

Note that, $\mathrm{Hom}(P(j),P(n))=0$ when $j<n$. Thus, $\textrm{Ext}^{1}_{A}(S(i),P(n)) \cong \mathbb{F}$ for a fixed $i$. Thus, this case contributes with $n-1$ to the value of $\mathcal{E}$.

 \item $\mathrm{Ext}(S(1),S(i))$ where $S(i)$ denotes the simple module associated to the vertex $i$ with $2 \leq i \leq n-1$. In this case, we consider the projective resolution of $S(1)$
\[
0 \to P(n) \to P(n-1) \to P(n-2) \to \cdots \to P(2) \to P(1) \to S(1) \to 0
\]

Applying the functor $ \textrm{Hom}_{A}(-,S(i))$,  we have the exact sequence
\[
0 \to \textrm{Hom}_{A}(P(1),S(i)) \to  \textrm{Hom}_{A}(P(2),S(i)) \to \cdots \to \textrm{Hom}_{A}(P(n-1),S(i)) \to \textrm{Hom}_{A}(P(n),S(i)) \to 0.
\]

Note that, $\mathrm{Hom}(P(j),S(i))\cong\mathbb{F}$ when $j=i$. Thus, $\textrm{Ext}^{1}_{A}(S(1),S(i)) \cong \mathbb{F}$ for a fixed $i$. Thus, this case contributes with $n-2$ to the value of $\mathcal{E}$.

 \item $\mathrm{Ext}(S(i),S(j))$ where $S(i)$ and $S(j)$ denote the simple modules associated to the vertex $i$ and $j$ respectively with $2 \leq i,j \leq n-1$.

In this case, we consider the projective resolution of $S(i)$
\[
0 \to P(n) \to \cdots \to P(i+1) \to P(i) \to S(i) \to 0
\]

Applying the functor $ \textrm{Hom}_{A}(-,S(j))$,  we have the exact sequence
\[
0 \to \textrm{Hom}_{A}(P(i),S(j)) \to   \cdots \to \textrm{Hom}_{A}(P(n-1),S(j)) \to \textrm{Hom}_{A}(P(n),S(j)) \to 0.
\]

Note that, $\textrm{Ext}^{1}_{A}(S(i),S(j)) \cong \mathbb{F}$ for a fixed $i$  when $i<j$ in this case $S(i)$ contributes with $n-(i+1)$ to the value of $\mathcal{E}$. Now, we need to compute
\begin{align*}
    \sum_{i=2}^{n-1} n-(i+1)&=n(n-2)- \left(\frac{n(n-1)}{2}-1\right) - (n-2)\\
    &=n^2-3n+2-\frac{n^2-n-2}{2}\\
    &=\frac{2n^2-6n+4-n^2+n+2}{2}=\frac{n^2-5n+6}{2}=\frac{(n-3)(n-2)}{2}= t_{n-3}
\end{align*}

    \end{enumerate}

\noindent Adding all the cases described above $\mathcal{E}=(n-1)+(n-2)+t_{n-3}$.
\end{itemize}
\end{proof}

\noindent Now, we are able to show an explicit formula for the number of exceptional pairs of a radical square zero Nakayama algebra. For an arbitrary Nakayama algebra of Rad$^k$, finding the number all the exceptional pairs and the number of all exceptional sequences seems a little bit difficult.

\begin{thm}\label{5.3}
The number of exceptional pairs of a radical square zero Nakayama algebra is $\frac{7n^2-17n+12}{2}$.
\end{thm}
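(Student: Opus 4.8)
The plan is to invoke the algorithm of Subsection~\ref{alg} together with the three quantities already computed for the radical square zero case. First I would recall from Theorem~\ref{Indrad}, specialized to $k=2$, that the number of indecomposable modules is $|\mathrm{Ind}(A)| = 2n - t_1 = 2n-1$, so the total number of ordered pairs of indecomposables is $(2n-1)^2$. Next I would quote Theorem~\ref{Hrad2}, which gives $\mathcal{H} = 5n-5$ and $\mathcal{E} = (n-1)+(n-2)+t_{n-3}$; since $t_{n-3} = \frac{(n-3)(n-2)}{2}$, this simplifies to $\mathcal{E} = 2n-3 + \frac{(n-3)(n-2)}{2} = \frac{4n-6 + n^2-5n+6}{2} = \frac{n^2-n}{2}$.

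Then, by Propositions~\ref{prop_extji} and~\ref{prop_homnotext}, the two sets of pairs counted by $\mathcal{H}$ and $\mathcal{E}$ are disjoint (here one must note the projective modules $[i,i+1]$ contribute nothing to $\mathcal{E}$, which is already built into the computation of Theorem~\ref{Hrad2}), so Equation~\eqref{key} of the algorithm applies and the number of exceptional pairs equals
\[
(2n-1)^2 - (\mathcal{H} + \mathcal{E}) = (2n-1)^2 - (5n-5) - \frac{n^2-n}{2}.
\]
The remaining work is purely arithmetic: expanding $(2n-1)^2 = 4n^2 - 4n + 1$ and combining over a common denominator of $2$ gives
\[
\frac{8n^2 - 8n + 2 - 10n + 10 - n^2 + n}{2} = \frac{7n^2 - 17n + 12}{2},
\]
which is the claimed formula.

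I do not anticipate a genuine obstacle here, since all the substantive input (the enumeration of indecomposables, the counts $\mathcal{H}$ and $\mathcal{E}$, and the disjointness of the two families) has been established in the preceding theorems and propositions. The only point requiring a word of care is verifying that the hypothesis of Proposition~\ref{prop_homnotext}—that the first module in the pair be non-projective—does not cause any pair to be missed or double-counted; but for the radical square zero Nakayama algebra the projective modules are exactly the projective–injective intervals $[i,i+1]$, which have $\mathrm{Ext}^i(P,-)\cong 0$ for all $i\ge 1$, so pairs $(P,N)$ with $P$ projective are governed entirely by $\mathrm{Hom}$ and are correctly counted in $\mathcal{H}$. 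Hence the algorithm applies verbatim and the arithmetic above completes the proof.
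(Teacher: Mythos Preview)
Your proof is correct and follows essentially the same approach as the paper: invoke Theorem~\ref{Indrad} (with $k=2$), Theorem~\ref{Hrad2}, and the algorithm's formula~\eqref{key}, then perform the arithmetic. One small slip in your closing aside: in the radical square zero case the projective modules are not only the projective--injective intervals $[i,i+1]$ but also the simple $P(n)=[n,n]$; this does not affect the argument, since $[n,n]$ likewise has vanishing $\mathrm{Ext}^i$ and the validity of the algorithm is already established in Subsection~\ref{alg}.
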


\begin{proof}
   According to Theorem \ref{Indrad} with $k=2$ the number of indecomposable modules is given by $2n-1$ and therefore the number of possible total pairs of indecomposable modules is $(2n-1)^2$. Similarly, by Theorem \ref{Hrad2} the number of pairs $\mathcal{H}$ is given by $5n-5$ and according to Theorem \ref{Hrad2}   the number of pairs $\mathcal{E}$ is given by $(n-1)+(n-2)+t_{n-3}$. Thus the number of exceptional pairs is given by: 

\begin{align*}
    \mathscr{EP}(\mathbb{A}_n/\mathrm{rad}^k)&= (2n-1)^2-(5n-5+(n-1)+(n-2)+t_{n-3})\\
    &=(2n-1)^2-(7n-8+t_{n-3})\\
    &=4n^2-4n+1-7n+8-\frac{(n-3)(n-2)}{2}\\
    &=\frac{8n^2-22n+18-n^2+5n-6}{2}\\
    &=\frac{7n^2-17n+12}{2}
\end{align*}
\end{proof}

\section{About the Integer Sequences Arising from Exceptional Pairs}

\noindent As we have seen throughout this paper, the number of exceptional pairs for any Nakayama algebra of type I can be obtained via Equation \ref{key}. This equation, in particular cases, generates numerical patterns with properties that may be interest. \par \bigskip 

\noindent The following arrays encodes number of indecomposable modules $\mathrm{Ind }Q$,  $\mathcal{H}$, $\mathcal{E}$ and  exceptional pairs for $A_{R_{(1,m)}}$. \par
\begin{table}[h!]
    \centering
\begin{tabular}{|c|c|c|c|c|c|c|c|c|}
\hline
$n/m$ & $2$ & $3$ & $4$ & $5$ & $6$ & $7$ & $8$ &$\cdots$\\
\hline
$3$ & $25$ & - & - & - & - & - & - & -\\
\hline
$4$& $64$ & $81$  & - & - & - & - & - &-\\
\hline
$5$ & $144$ & $169$ & $196$  & - & - & - & - &-\\
\hline
$6$& $289$ & $324$ & $361$ & $400$ & - & - & - &-\\
\hline
$7$ & $529$ & $576$ & $625$ & $676$ & $729$ & - & - &-\\
\hline
$8$ & $900$ & $961$ & $1024$ & $1089$ & $1156$ & $1225$ & - &-\\
\hline
$9$ & $1444$ & $1521$ & $1600$ & $1681$ & $1764$ & $1849$ & $1936$ &-\\

\hline
$\vdots $& $\vdots$ & $\vdots $ & $\vdots $ & $\vdots $ & $\vdots $ & $\vdots $ & $\vdots$& $\vdots$\\
\hline
\end{tabular}
    \caption{$|\mathrm{Ind }Q|^2$ for $A_{R_{(1,m)}}$}
    \label{Table1I}
\end{table}

\noindent The sequence in the first column is not encoded in the On-Line Encyclopedia of Integer Sequences whereas it can be obtain by the expression $\frac{(n^2 - n + 4)^2}{4}$. This expression also allow us to obtain recursively any other column $i\geq 2$ in the array via the following formula: $$\frac{(n^2-n+4)^2}{4}+(i-1)(n^2-n+5)+\sum_{h=2}^{i}2(h-2).$$

\noindent For the number of modules $\mathcal{H}$ and $\mathcal{E}$ we have the following arrays:

\begin{table}[h!]

\begin{tabular}{|c|c|c|c|c|c|c|c|c|}
\hline
$n/m$ & $2$ & $3$ & $4$ & $5$ & $6$ & $7$ & $8$ &$\cdots$\\
\hline
$3$ & $10$ & - & - & - & - & - & - & -\\
\hline
$4$& $21$ & $28$  & - & - & - & - & - &-\\
\hline
$5$ & $42$ & $51$ & $61$  & - & - & - & - &-\\
\hline
$6$& $78$ & $89$ & $102$ & $115$ & - & - & - &-\\
\hline
$7$ & $135$ & $148$ & $164$ & $181$ & $197$ & - & - &-\\
\hline
$8$ & $220$ & $235$ & $254$ & $275$ & $296$ & $315$ & - &-\\
\hline
$9$ & $341$ & $358$ & $380$ & $405$ & $431$ & $456$ & $478$ &-\\
\hline
$\vdots $& $\vdots$ & $\vdots $ & $\vdots $ & $\vdots $ & $\vdots $ & $\vdots $ & $\vdots$& $\vdots$\\
\hline
\end{tabular}
\caption{Number of pairs $\mathcal{H}$}
    \label{Table2H}
\end{table}

\begin{table}[h!]

\begin{tabular}{|c|c|c|c|c|c|c|c|c|}
\hline
$n/m$ & $2$ & $3$ & $4$ & $5$ & $6$ & $7$ & $8$ &$\cdots$\\
\hline
$3$ & $3$ & - & - & - & - & - & - & -\\
\hline
$4$& $8$ & $12$  & - & - & - & - & - &-\\
\hline
$5$ & $19$ & $25$ & $31$  & - & - & - & - &-\\
\hline
$6$& $40$ & $48$ & $57$ & $65$ & - & - & - &-\\
\hline
$7$ & $76$ & $86$ & $98$ & $110$ & $120$ & - & - &-\\
\hline
$8$ & $133$ & $145$ & $160$ & $176$ & $191$ & $203$ & - &-\\
\hline
$9$ & $218$ & $232$ & $250$ & $270$ & $290$ & $308$ & $322$ &-\\
\hline
$\vdots $& $\vdots$ & $\vdots $ & $\vdots $ & $\vdots $ & $\vdots $ & $\vdots $ & $\vdots$& $\vdots$\\
\hline
\end{tabular}
\caption{Number of pairs $\mathcal{E}$}
    \label{Table3E}
\end{table}

\noindent The sequence in the first column of the Table \ref{Table2H} is encoded in the On-Line Encyclopedia of Integer Sequences (OEIS) by A$014626$. Unlike the last case we do not have a recursively formula for any other entry of this array. In Table \ref{Table3E} the first column is not encoded in the (OEIS) as well any other column in the array.  Finally we can construct the array associated to the exceptional pairs for  $A_{R_{(1,m)}}$ 

\begin{table}[h!]

\begin{tabular}{|c|c|c|c|c|c|c|c|c|}
\hline
$n/m$ & $2$ & $3$ & $4$ & $5$ & $6$ & $7$ & $8$ &$\cdots$\\
\hline
$3$ & $12$ & - & - & - & - & - & - & -\\
\hline
$4$& $35$ & $41$  & - & - & - & - & - &-\\
\hline
$5$ & $83$ & $93$ & $104$  & - & - & - & - &-\\
\hline
$6$& $171$ & $187$ & $202$ & $220$ & - & - & - &-\\
\hline
$7$ & $318$ & $342$ & $363$ & $385$ & $412$ & - & - &-\\
\hline
$8$ & $547$ & $581$ & $610$ & $638$ & $669$ & $707$ & - &-\\
\hline
$9$ & $885$ & $931$ & $970$ & $1006$ & $1043$ & $1085$ & $1136$ &-\\
\hline
$\vdots $& $\vdots$ & $\vdots $ & $\vdots $ & $\vdots $ & $\vdots $ & $\vdots $ & $\vdots$& $\vdots$\\
\hline
\end{tabular}
\caption{Number of pairs in $A_{R_{(1,m)}}$}
    \label{Table4EP}
\end{table}

\noindent As in the case of the Table \ref{Table1I} none of the sequences of the columns are in the OEIS, however the array can be construct just using the first column which satisfy the following formula: 
\[\frac{n^4}{6}- \frac{n^3}{2} + \frac{7 n^2}{3}-4n+3\]

\noindent For the second column which count the number of exceptional pairs of  $A_{R_{(1,3)}}$ \footnote{This is just the sequence generated by the number of exceptional pairs of  $A_{R_{(1,2)}}$ plus the elements of the sequence A$027689$ in the OEIS} the formula is  

\[\frac{n^4}{6}- \frac{n^3}{2} + \frac{7 n^2}{3}-4n+3+(n-3)^2+(n-3)+4=\frac{n^4}{6} - \frac{n^3}{2}+ \frac{7 n^2}{3}-3 n + (n - 3)^2 + 4\]

\noindent For any other column with $m>3$ we have that the number of exceptional pairs of $A_{R_{(1,m)}}$ is equal to the number of exceptional pairs of $A_{R_{(1,3)}}$ plus $\displaystyle \sum_{t=3}^{m-1}t^2+\sum_{i=n-m}^{n-4}i(i+1)$.

\begin{rem}
The number of exceptional pairs change depending on the vertex where the relation start. Note that no all the relations of length $k$ give the same number of exceptional pairs. For example the number of exceptional pairs of an algebra $\mathbb{A}_7$ with a relation of type $R_{(1,2)}$ is equal to $318$ whereas the same algebra with a relation of type $R_{(4,2)}$ $260$.
\end{rem}

\begin{rem}
The number of exceptional pairs of $A_{R_{(1,2)}}$ is equal to the number $A_{R_{(n-2,2)}}$.
\end{rem}

\noindent In a similar fashion, for the case presented in Section \ref{Auslanderalg}
\begin{rem}
The integer sequence determined by the number of exceptional pairs in the Auslander algebra of a radical square zero algebra of type $\mathbb{A}_n$ does not appear in the On-Line Encyclopedia of Integer Sequences (OEIS).
\end{rem}

\noindent Finally, regarding to the case studied in Section \ref{rad2} we have the following observation 
\begin{rem}
  According with Theorem \ref{5.3}  we recall that the number of exceptional pairs is encoded by sequence A140065, the number of $|\mathrm{Ind}\, Q|^2$ by sequence A016754 (see Theorem \ref{Indrad}), and $\mathcal{H}$ by sequence A014626 in the On-Line Encyclopedia of Integer Sequences (see Theorem \ref{Hrad2}), whereas $\mathcal{E}$ corresponds to a triangular number.
\end{rem}

\subsection*{Acknowledgment}
\noindent The first author expresses his deep gratitude to his family and his wife for their constant support, patience, and encouragement throughout the development of this work. He also wishes to dedicate this paper to Professor Agustín Moreno Cañadas on the occasion of his 65th birthday.\par \bigskip 

\noindent The second author acknowledges the institutional support of Universidad de Antioquia (UdeA).

\paragraph{Author contributions} P. F. and D. R. contributed equally to the manuscript.
\paragraph{Data Availability} No datasets were generated or analysed during the current study.
\subsection*{Declarations}
\paragraph{Competing interests} The authors declare no competing interests.

\end{document}